\pgfplotsset{compat=1.15}
\definecolor{Frangipane}{RGB}{255, 217, 179}
\definecolor{Peach}{RGB}{255, 204, 152}
\newtheorem{theorem}{Theorem}[section]
\newtheorem{cor}[theorem]{Corollary}
\newtheorem{prop}[theorem]{Proposition}
\newtheorem{lemma}[theorem]{Lemma}
\theoremstyle{remark}{}
\newtheorem{rmk}[theorem]{Remark}
\theoremstyle{definition}
\newtheorem{fact}[theorem]{Fact}
\newtheorem{convention}[theorem]{Convention}
\newcommand{\C}{\mathbb C}
\newcommand{\R}{\mathbb R}
\newcommand{\Mod }{\mathrm{Mod}}
\newcommand{\Z}{\mathbb Z}
\tiny\color{gray},
\title{Relative periods of abelian differentials in a prescribed stratum}
\date{}
\author{Thomas Le Fils}
\address{Sydney Mathematical Research Institute, The University of Sydney, NSW Australia}
\email{thomaslefils@gmail.com}
\begin{document}

\begin{abstract}
We  characterise the elements of $H^1(S, Z, \mathbb C)$, where $S$ is a closed surface and $Z\subset S$ is a finite set, that arise as the relative periods of an abelian differential in a given connected component of a stratum of their moduli space. 
This generalises a theorem obtained independently by Bainbridge, Johnson, Judge and Park and the author, and answers a question of Filip.
\end{abstract}
\maketitle

\section{Introduction}
The moduli space of abelian differentials $\Omega \mathcal M_g$ is the set of $(X, \omega)$ where $X$ is a closed Riemann surface of genus $g$ and $\omega$ is a non-zero holomorphic one-form on~$X$.
These spaces parametrise translation surfaces, which are fundamental objects of Teichm\"uller theory and low dimensional dynamics such as billiards in polygons, see for example \cite{MT02, Z06}.
The moduli space $\Omega\mathcal M_g$ is naturally stratified by the strata $\mathcal H(n_1, \ldots, n_k)$ formed by the abelian differentials with zeroes of multiplicity $n_1, \ldots, n_k$, where $\sum_i n_i = 2g - 2$.

Let us fix $S$ a closed oriented surface of genus $g$ and $Z\subset S$ a finite set of size $k$.
Given $(X, \omega)\in \mathcal H(n_1, \ldots, n_k)$ and an orientation preserving homeomorphism $f\colon S\to X$ that sends $Z$ to the zeroes of $\omega$, we define the period map of $(X, \omega)$ to be the element of $H^1(S, Z, \C) = \mathrm{Hom}(H_1(S, Z, \Z), \C)$ defined by $\gamma\mapsto \int_{f(\gamma)} \omega$.
The aim of this article is to determine the $\chi$ in $H^1(S, Z, \C)$ that arise as the period map of an abelian differential in a given connected component of a stratum, answering a question of Filip \cite[Question 3.1.15]{F24}.

Given $\chi\in H^1(S, Z, \C)$, we define an equivalence relation $\sim$ on $Z$ by $x\sim y$ if there exist $\alpha\in H_1(S, \mathbb Z)$ and $\beta$ a path from $x$ to $y$ such that $\chi(\alpha) = \chi(\beta)$.
The cardinalities of the equivalence classes form a partition of $k$ that we denote by $\Psi(\chi)$.
Poincar\'e duality gives a symplectic pairing on $H^1(S, \R)$. 
The evaluation of this pairing on real and imaginary parts of $\chi$ defines a map \[V\colon H^1(S, Z, \C)\to  H^1(S, \C) = H^1(S, \R) \oplus H^1(S, \R)\to \R.\]

\begin{theorem}\label{main_theorem}
Let $\chi\in H^1(S, Z, \C)$ and $\mathcal C\subset \mathcal H(n_1, \ldots, n_k)$ be a connected component of a stratum of abelian differentials.
There exists $(X, \omega)\in \mathcal C$ with period map $\chi$ if and only if the following two conditions hold:\newline
\begin{enumerate}
    \item  {\ } \hspace{4cm}$\displaystyle{V(\chi)>0}$.\newline
    \item If $\Lambda = \chi(H_1(S, \Z))$ is a lattice in $\C$, then there exists a partition  $A_1, \ldots, A_l$ of $\{1,\ldots, k\}$ such that the numbers $\mathrm{card}(A_i)$ form the partition $\Psi(\chi)$ and \[V(\chi)\geqslant \mathrm{Area}\left (\C/\Lambda\right )\max_{1\leqslant i\leqslant l}\sum_{j\in A_i} \left (n_j+1\right ).\]
\end{enumerate}
\end{theorem}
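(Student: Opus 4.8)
The plan is to prove necessity by a short branched-cover argument and sufficiency by an explicit construction that upgrades the known realisation of $\chi$ in a stratum to a realisation in the prescribed component. I will freely use that, in period coordinates, the period map is a local homeomorphism from the marked stratum onto $H^1(S,Z,\C)$ — so each component's realisability locus is open — and the Kontsevich--Zorich classification of the components of $\mathcal H(n_1,\dots,n_k)$ by hyperellipticity and, when all the $n_j$ are even, by the parity of the spin structure attached to the divisor $\sum_j\frac{n_j}{2}[z_j]$, together with its finite list of low-genus exceptions.

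\emph{Necessity.} Let $(X,\omega)\in\mathcal C$ realise $\chi$ through a marking $f$, and transport homology and cohomology to $X$. Then $V(\chi)=\frac i2\int_X\omega\wedge\bar\omega=\mathrm{Area}(X,\omega)>0$, giving (1). Assume $\Lambda=\chi(H_1(S,\Z))$ is a lattice. Then $x\mapsto\int\omega$ descends to a holomorphic branched cover $\pi\colon X\to\C/\Lambda$, and since $\pi$ pulls the flat area form of $\C/\Lambda$ back to that of $(X,\omega)$, its degree is $d=V(\chi)/\mathrm{Area}(\C/\Lambda)$, which is a positive integer. The branch points of $\pi$ are exactly the zeros of $\omega$, with local degree $n_j+1$ at the $j$-th zero, and by construction of $\sim$ two points of $Z$ are equivalent if and only if their $\pi$-images coincide. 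Taking $A_1,\dots,A_l$ to be the $\sim$-classes — whose cardinalities form $\Psi(\chi)$ by definition — the sum $\sum_{j\in A_i}(n_j+1)$ of the local degrees over the fibre of the $i$-th class is at most $d$, which is (2).

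\emph{Sufficiency.} Assume (1) and (2). By the theorem of Bainbridge--Johnson--Judge--Park and, independently, the author, essentially these same hypotheses already yield some $(X_0,\omega_0)\in\mathcal H(n_1,\dots,n_k)$ with period map $\chi$; the task is to modify it, keeping $\chi\in H^1(S,Z,\C)$ literally fixed, until it lands in $\mathcal C$. When $\Lambda$ is not a lattice, (1) forces it to be a non-discrete subgroup of $\C$, so there is ample room: starting from the refined Haupt realisation of the absolute part of $\chi$, I would insert the zeros along short saddle connections of prescribed holonomy, with enough slack to perform a bounded, period-neutral surgery near a zero (or a bounded change in the gluing combinatorics of a polygonal model) that flips the spin parity, and likewise to pass into or out of the hyperelliptic locus; every component is thereby reached. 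When $\Lambda$ is a lattice, the target is, up to the slack afforded by a possibly strict inequality in (2), a degree-$d$ branched cover $\pi\colon X\to\C/\Lambda$ with $d=V(\chi)/\mathrm{Area}(\C/\Lambda)$, whose ramification is distributed along a partition $A_1,\dots,A_l$ realising $\Psi(\chi)$ with $d\geqslant\max_i\sum_{j\in A_i}(n_j+1)$: over the image of the $i$-th class one puts the zeros of $A_i$ together with $d-\sum_{j\in A_i}(n_j+1)$ extra unramified sheets. Such a cover is produced from a monodromy representation of the fundamental group of the $l$-punctured torus into $\Sy d$ with the prescribed local cycle types; the positions of the marked images in $\C/\Lambda$ and the relevant lifts are then forced by $\chi$ — the consistency of this being exactly the definition of $\sim$ — while the residual intra-fibre relative periods are matched by adjusting the monodromy. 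If the cover obtained is not in $\mathcal C$, one corrects its component by a bounded change of the monodromy datum: transposing a pair of cycles over a generic point, altering the monodromy around the torus generators, or, in the tight case, switching between a hyperelliptic and a non-hyperelliptic model.

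\emph{Main obstacle.} The hard part is the equality case $V(\chi)=\mathrm{Area}(\C/\Lambda)\cdot\max_i\sum_{j\in A_i}(n_j+1)$, where there is no slack at all: the surface must be the minimal-degree branched cover of $\C/\Lambda$ with the full ramification of one fibre packed in and nothing else, so the only remaining freedom is combinatorial. One must show that every component allowed by Kontsevich--Zorich — in particular the hyperelliptic one and both spin parities — is attained by such a rigid model, which reduces to exhibiting tuples in $\Sy d$ with the required cycle structure, the correct transitivity, and the prescribed connected-component invariant, the spin parity being read off the permutation data through a Riemann--Hurwitz-style count. The sporadic low-genus components would be dealt with by a separate finite verification, and one must also check carefully that every period-neutral surgery used above genuinely fixes the class $\chi\in H^1(S,Z,\C)$, not merely its mapping-class-group orbit.
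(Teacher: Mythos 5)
Your necessity argument is correct and matches the paper's. The sufficiency half, however, has two genuine gaps. First, you invoke the Bainbridge--Johnson--Judge--Park / Le~Fils theorem as if it ``already yields some $(X_0,\omega_0)\in\mathcal H(n_1,\dots,n_k)$ with period map $\chi$'': those results only control the \emph{absolute} period map, not the relative one, so there is no known starting surface realising $\chi\in H^1(S,Z,\C)$ even in an unspecified component -- producing such a surface is already most of the content of the theorem. Second, and more seriously, the step that is supposed to move from ``some component'' to ``the prescribed component $\mathcal C$'' is carried entirely by unspecified devices: a ``bounded, period-neutral surgery near a zero that flips the spin parity'' and a way ``to pass into or out of the hyperelliptic locus'' while \emph{keeping the class $\chi$ literally fixed}, and, in the lattice case, a ``bounded change of the monodromy datum'' that corrects the component while preserving all relative periods. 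No such surgery is constructed or cited, and its existence is exactly what is being proved (it is the content of the remark after the theorem that the conditions are component-independent); you flag the equality case as the main obstacle, but the difficulty is present in every case, since changing the spin parity or hyperellipticity within a fixed relative-period class is precisely the hard combinatorial/geometric work.

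For comparison, the paper never tries to fix $\chi$ and deform the surface. It shows the realisability locus $H(\mathcal C)\subset H^1(S,Z,\C)$ is open and invariant under $\Mod(S,Z)$ and $\mathrm{GL}_2^+(\R)$, handles the non-lattice case by the Kapovich/Calsamiglia--Deroin--Francaviglia orbit-closure dynamics (degenerating $\chi$ to a lattice representation and using openness; genus two needs a separate argument since this requires $g\geqslant 3$), and in the lattice case reduces everything, via transitivity on absolute parts, branch-point wiggling, and point-pushing on relative parts, to exhibiting a single explicit slit-torus cover of each volume $d$ and partition $P$ in each connected component -- with Zorich's single-cylinder criterion used to certify non-hyperellipticity and an Arf-invariant computation to pin down the spin parity. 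Those explicit constructions (Sections 4--7 of the paper) are the substance that your proposal leaves as an assertion, so as it stands the argument does not go through.
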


\begin{rmk}
Observe that the conditions of \cref{main_theorem} do not depend on the connected component $\mathcal C$. In particular the period map of an abelian differential in a given stratum can also be realised by other abelian differentials in any component of this stratum.
\end{rmk}

Each stratum $\mathcal H(n_1, \ldots, n_k)$ is locally modelled on $H^1(S, Z, \C)$.
Indeed, let $\widetilde {\mathcal H}(n_1, \ldots, n_k)$ be the cover of $\mathcal H(n_1, \ldots, n_k)$ consisting of $(X, \omega, f)$ where $(X, \omega)$ is in $ \mathcal H(n_1, \ldots, n_k)$ and $f\colon S\to X$ is an orientation preserving homeomorphism, sending $Z$ to the zeroes of $\omega$, up to isotopy.
The period maps define
\[\mathrm{Per}\colon \widetilde {\mathcal H}(n_1, \ldots, n_k)\to H^1(S, Z, \C)\]
that is well-known to be a local homeomorphism, see for example \cite[Proposition 6.3]{Yoccoz10}.
This gives the strata an affine structure that is fundamental in the study of the moduli space of abelian differentials. It is for example used to define the Masur-Veech volumes.
\cref{main_theorem} characterises the image of $\mathrm{Per}$.

\subsection*{Haupt's theorem}
The absolute period map of $(X, \omega)\in \Omega\mathcal M_g$, is the element of $H^1(S, \C) = \mathrm{Hom}(H_1(S, \Z), \C)$ defined by restricting its period map to absolute homology.
Haupt \cite{H20} characterised in 1920 the $\chi\in H^1(S, \C)$ that arise as period map of an abelian differential.
The datum of an abelian differential on $S$ is equivalent to that of a branched $(G, X)$ structure on $S$ in the sense of Thurston \cite[Chapter 3]{T97}, where $X = \R^2$ and $G = \R^2$ is the group of translations of $X$.
Haupt's theorem can be interpreted in this language as the characterisation of the holonomies of translation surfaces of genus $g$.
This result was rediscovered by Kapovich in \cite{K20} using a different approach, based on a dynamical study of the action of the mapping class group of $S$ on $H^1(S, \C)$.
\subsection*{Isoperiodic foliation}
Restricting the period maps to absolute homology defines
\[\mathrm{APer}\colon \widetilde{\Omega\mathcal M_g}\to H^1(S, \C)\]
where $\widetilde {\Omega\mathcal M_g} = \bigcup_{\sum_i n_i = 2g-2} \widetilde{\mathcal H}(n_1, \ldots, n_k)$.
The level sets of $\mathrm{APer}$ form leaves of a foliation of $\Omega\mathcal M_g$ called the isoperiodic foliation, or kernel foliation. This foliation has been extensively studied recently, see for example \cite{McMullenIso, H18, Ygouf,Winsor, CDF23}.
In \cite{CDF23}, Calsamiglia, Deroin and Francaviglia raised the question of characterising the absolute period maps of abelian differentials in a given stratum.
This amounts to determining the leaves of the isoperiodic foliation that intersect a given stratum of $\Omega\mathcal M_g$.
This question has been answered independently with different methods by Bainbridge, Johnson, Judge and Park \cite{BJJP22} and the author \cite{LF22}, refining the theorem of Haupt.

Note that \cref{main_theorem} easily implies the results of \cite{BJJP22, LF22} by taking all the $A_i$ to be singletons.
However, it is strictly stronger since the periods of paths joining different zeroes are not considered in \cite{BJJP22, LF22}.
The present article does not rely on them, and thus also provides a new proof of these results.
We follow the same strategy as \cite{LF22}: based on ideas and results of Kapovich, we study the mapping class group action of $S$ on the relative cohomology $H^1(S, Z, \C)$. 
The images of the strata by $\mathrm{Per}$ are open and invariant under this action.
This reduces the proof of \cref{main_theorem} to the construction of specific abelians differentials, that are covers of tori.

\subsection*{Other related results}
The results of \cite{BJJP22, LF22} have also been extended in other directions: in \cite{CFG22, CF24} to characterise periods of meromorphic differentials in a given stratum, and in \cite{LF21} to characterise the holonomies of branched complex projective structures in a prescribed stratum of their moduli space.

\subsection*{Tori covers}
We show in \cref{sec:MCG} that \cref{main_theorem} reduces to a problem of realisability of branched covers of the torus $T = \mathbb S^1\times \mathbb S^1$.
A branched cover $S\to T$ of degree $d$ induces branch data $\mathcal D = \{A_1, \ldots, A_k\}$, where $A_i = [n^i_1, \ldots, n^i_{l_i}]$ is a partition of $d$, given by the local degrees of the cover at each point of a fiber containing a branch point.
It follows from the Riemann-Hurwitz formula that $\sum_{i,j} \left (n^i_j  - 1\right ) $ is even.
We call a set $\mathcal D$ of partitions of $d$ satisfying this parity condition a candidate branch data.
The candidate branch data $\mathcal D$ is said to be realisable if it is the branch data of some cover $S\to T$.

\begin{theorem}\label{thm:tori_covers}
Let $\mathcal D = \{A_1, \ldots, A_k\} = \{ [n^1_1, \ldots, n^1_{l_1}], \ldots, [n^k_1, \ldots, n^k_{l_k}]\}$ be a candidate branch data.
Let $\mathcal C$ be a connected component of $\mathcal H\left (n^1_1-1, \ldots, n^k_{l_k} - 1\right )$.
There exists a cover $S\to T$ with branch data $\mathcal D$ that induces a surjection on first homology such that the induced flat structure on $S$ is in $\mathcal C$.
\end{theorem}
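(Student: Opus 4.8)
\emph{Reduction to monodromy.} Throughout I assume $\mathcal D$ has at least one nontrivial part, equivalently the genus $g$ of $S$ is at least $2$; the remaining case occurs only with $d=1$, where the unique cover inducing a surjection on homology is the identity. The plan is to recast \cref{thm:tori_covers} as a statement about permutations of $\{1,\ldots,d\}$, settle existence and homological surjectivity by an explicit choice of monodromy, and then reach each connected component of the stratum by a local surgery and, in the hyperelliptic cases, by an explicit symmetric origami. First, Riemann--Hurwitz determines $g$ and shows the induced stratum is exactly $\mathcal H(n^1_1-1,\ldots,n^k_{l_k}-1)$; by the Kontsevich--Zorich classification $\mathcal C$ is then one of at most three components, distinguished by hyperellipticity and, when all $n^i_j$ are odd, by the parity of the spin structure, with finitely many low-genus exceptions (in genus $2$ and $3$, and for $\mathcal H(g-1,g-1)$) that I would carry along by hand. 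Fix $k$ distinct points $q_1,\ldots,q_k$ on $T$ and write $\pi_1(T\setminus\{q_1,\ldots,q_k\})=\langle a,b,c_1,\ldots,c_k\mid [a,b]c_1\cdots c_k\rangle$; a connected degree-$d$ cover with branch data $\mathcal D$ is the same datum as a transitive homomorphism $\rho$ to $\mathfrak S_d$ with $\rho(c_i)$ of cycle type $A_i$, the pulled-back flat structure being the origami with horizontal and vertical monodromy $\rho(a),\rho(b)$. Finally $f_*\colon H_1(S)\to H_1(T)=\Z^2$ is onto if and only if the cover does not factor through a nontrivial isogeny of $T$, equivalently there is no non-constant colouring of the $d$ sheets by a finite abelian group on which $\rho(a)$ and $\rho(b)$ act by translations while every $\rho(c_i)$ acts trivially.

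\emph{Existence and surjectivity on homology.} I would choose permutations $\gamma_i$ of cycle type $A_i$, then choose $\alpha$ with a fixed point such that the orbits of $\alpha$ together with those of the $\gamma_i$ are transitive, and finally solve $[\alpha,\beta]=(\gamma_1\cdots\gamma_k)^{-1}$ for some $\beta$. This equation is solvable exactly when $\alpha^{-1}(\gamma_1\cdots\gamma_k)^{-1}$ is conjugate to $\alpha$, and the candidate-data hypothesis — which says precisely that $\gamma_1\cdots\gamma_k$ is an even permutation — leaves enough room, after adjusting $\alpha$ on its non-fixed points, to arrange this; the finitely many small-degree configurations I would check directly. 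For such a choice, a colouring as above must be invariant under $\rho(a)=\alpha$ since $\alpha$ has a fixed point, hence constant on the orbits of $\langle\alpha,\gamma_1,\ldots,\gamma_k\rangle$, that is on all $d$ sheets, so no nontrivial colouring exists, $f_*$ is onto, and the cover is connected. This produces one valid cover, lying in some component of the stratum.

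\emph{Reaching every component.} If the stratum is connected we are done. If it carries a spin invariant, I would modify the cover just built by replacing the vertical monodromy $\rho(b)$ with $\rho(b)z$ for a suitable $z$ in the centraliser of $\rho(a)$ (for instance a transposition of two $\rho(a)$-fixed sheets): this preserves $[\rho(a),\rho(b)]$, hence the branch data, and preserves $\rho(a)$, hence the surjectivity on homology, while realising a local flip that changes the Arf invariant of the spin structure, so both spin components are reached. If the stratum admits a hyperelliptic component, then $\mathcal D$ is forced into the shape dictated by $\mathcal H(2g-2)$ or $\mathcal H(g-1,g-1)$ — one long cycle, or two, together with trivial branch points and extra unramified sheets — and I would write down explicitly a symmetric ``staircase'' origami carrying an order-two isometry whose quotient is a sphere, hence a hyperelliptic translation surface, that realises $\mathcal D$ by absorbing the extra sheets and branch points symmetrically, and check as before that it is reduced.

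\emph{Main obstacle.} The delicate points are the explicit hyperelliptic construction — in particular accommodating arbitrary extra unramified sheets and trivial branch points while keeping the order-two isometry and the reducedness of the cover — and the verification that the centraliser modification genuinely flips the Arf invariant, together with the bookkeeping for the Kontsevich--Zorich exceptions in genus $2$ and $3$ and for $\mathcal H(g-1,g-1)$. The surjectivity on homology in the first step is subtler than it may look, since for some branch data one cannot take $\rho(a)$ and $\rho(b)$ with fixed points simultaneously, and it is exactly the candidate-data parity that supplies the room needed there.
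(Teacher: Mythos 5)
Your overall skeleton (monodromy representations $\rho\colon\pi_1(T\setminus\{q_i\})\to\mathfrak S_d$, a colouring criterion for surjectivity on homology, then surgeries on $\rho$ to reach each component) is a legitimate alternative to the paper's route, which instead deduces the theorem from the lattice case of the main theorem via mapping class group dynamics plus one explicit square-tiled construction per component. But as written the proposal has genuine gaps exactly at the load-bearing steps. First, the existence step: solving $[\alpha,\beta]=(\gamma_1\cdots\gamma_k)^{-1}$ with $\alpha$ having a fixed point and $\langle\alpha,\gamma_1,\ldots,\gamma_k\rangle$ transitive is not a routine consequence of the parity hypothesis; the statement that the product of the $\gamma_i$ is even only guarantees that it is \emph{some} commutator in $\mathfrak A_d$, not that it is a commutator of the specific shape you need, and ``leaves enough room, after adjusting $\alpha$ on its non-fixed points'' is precisely the content of the realisability results of Baranski et al.\ cited in the introduction, which the present paper deliberately reproves by explicit slit constructions. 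You would need an actual argument here, not an appeal to small cases.

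Second, and more seriously, the component-selection step is asserted rather than proved, and this is where the theorem's content lies. (i) The claim that replacing $\rho(b)$ by $\rho(b)z$, with $z$ a transposition of two $\rho(a)$-fixed sheets, flips the Arf invariant is unproved (you list it as an obstacle); its effect depends on how the added handle sits with respect to a symplectic basis, and moreover your construction only guarantees one $\rho(a)$-fixed sheet, so such a $z$ need not exist when $d=\max_i\sum_j n^i_j$ forces few unramified sheets. (ii) Nothing in the proposal certifies which component any of your covers lies in: in strata with three components, a parity flip may well land in the hyperelliptic component, and you never verify non-hyperellipticity of either the base cover or its modifications. The paper needs a concrete criterion for exactly this purpose, namely Zorich's single-cylinder restriction (\cref{prop:single_cylinder_Zorich}), together with explicit Arf computations in a chosen symplectic basis; some substitute is indispensable in your scheme. (iii) The hyperelliptic case is again deferred: one must realise the forced branch data while carrying arbitrarily many extra unramified sheets and keeping an order-two isometry with quotient a sphere, which is what the explicit constructions of \cref{sec:hyperelliptic} (e.g.\ \cref{fig:hypersingle}, \cref{fig:hyp22dif}, \cref{fig:hyp22eq}) accomplish and which does not follow from the phrase ``absorbing the extra sheets symmetrically.'' Until (i)--(iii) are supplied, the proposal proves neither existence in a prescribed component nor, independently, the realisability statement it starts from.
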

The problem of determining the candidate branch data $\mathcal D$ that are realisable, in the setting where $T$ is replaced with the sphere $\mathbb S^2$ is a subtle question that was raised by Hurwitz \cite{H01}.
We refer to \cite{P20} for a recent survey of progress regarding this problem, that is still open in full generality.
In the articles \cite{BSGDKZH03, BGK04}, the authors determine the candidate branch data that are realisable by a branched cover of the torus that is surjective at the level of fundamental groups. 
\cref{thm:tori_covers} refines this result, by prescribing the connected component of the stratum of abelian differentials the covering surface lies in.
Our proof does not rely on \cite{BSGDKZH03, BGK04} and thus provides a new proof of the existence of tori covers with prescribed branch data that are surjective on homology.

\subsection*{Organisation of the article}
In \cref{sec:reminder} we recall some known facts about abelian differentials and show that \cref{main_theorem} implies \cref{thm:tori_covers}.
We then discuss the action of mapping classes on cohomology in \cref{sec:MCG}, and prove that it suffices to construct specific examples of abelian differentials in each component to prove \cref{main_theorem}.
Then we provide constructions for each connected component: in \cref{sec:hyperelliptic} for the hyperelleptic components, in \cref{sec:odd} for the odd components, in \cref{sec:even} for the even components and for the remaining ones in \cref{sec:others}.
Finally, we provide a proof of \cref{main_theorem} for $g = 2$ in \cref{sec:genus_two}.
\subsection*{Acknowledgements}
The main result of this article was proven independently and simultaneously by Dawei Chen and Gianluca Faraco \cite{CF25} with different methods. I wish to thank them for our interesting discussions on our results and different approaches.
%\cite{ChenFaraco}
I thank Viveka Erlandsson, Scott Mullane, and Paul Norbury for their invitation and hospitality at the MATRIX program \emph{Teichmüller Theory and Flat Structures} in Creswick.
I thank the authors of the \emph{surface dynamics} package \cite{surface_dynamics} for {S}age{M}ath, which allowed me to easily experiment and simplify some constructions.
I also thank Simion Filip for his interest, and Maxime Wolff for his helpful suggestions about presentation.
This research was supported by an SMRI Postdoctoral Fellowship.

\section{Reminders on abelian differentials}\label{sec:reminder}
In this section we recall some well-known facts concerning abelian differentials and introduce notations.
We also explain the necessity of the two conditions of \cref{main_theorem}.
\subsection{Periods and volume}
One can concretely compute the function $V$ as follows: let $(a_1, b_1, \ldots, a_g, b_g)$ be a symplectic basis of $H_1(S, \Z)$.
Then for every  $\chi\colon H_1(S, Z, \Z)\to \R^2$,
\[V(\chi) = \sum_{i=1}^g \det \left (\chi\left (a_i\right ), \chi\left (b_i\right )\right ).\]
The relative periods of $(X, \omega)\in \Omega\mathcal M_g$ are the elements $\int_\gamma \omega$, where $\gamma\in H_1(X, Z, \Z)$ and $Z$ is the set of zeroes of $\omega$. The absolute periods of $(X, \omega)$ are the elements $\int_\gamma\omega$, where $\gamma\in H_1(S, \Z)$.
If $\chi$ is the period of $(X, \omega)$, then $V(\chi)$ is the volume of the flat metric defined by $(X, \omega)$, that is
\[V(\chi) = \frac{i}{2}\int \omega\wedge\overline{\omega}.\]
Indeed this follows from the Riemann bilinear relations, see for example \cite[Section 14]{Nara92}.
This explains the necessity of the first condition in \cref{main_theorem}.
Note that in particular the periods' span of $(X, \omega)$ is discrete if and only if it is a lattice in $\C$.

\subsection{Periods and covers of the torus}
Let us recall the well-known equivalence between branched tori covers and abelian differentials with discrete absolute periods.
Suppose that $p\colon S\to T$ is a branched cover of the torus $T = \C/\Lambda$  that induces a surjection on first homology.
One can pull back the abelian differential $dz$ of $T$ to get a complex structure $X$ on $S$ and an abelian differential $\omega$ on $X$.
The absolute periods of $\omega$ span $\Lambda$.
Conversely, if the absolute periods of $(X, \omega)$ span $\Lambda$ a lattice in $\C$, then let us fix $z_0\in X$. 
The map $z\mapsto\int_{z_0}^z \omega$ is a well-defined branched cover $p\colon X\to \C/\Lambda$ that is surjective at the first homology level.

We can recover the branch datum of $p$ from the relative periods of $(X, \omega)$.
Indeed, we have $\int_x^y \omega\in \Lambda$ if and only if $p(x) = p(y)$. 
Moreover, the degree $d$ of $p$ satisfies $V(\chi) = d\cdot \mathrm{Area}(\C / \Lambda)$.
That explains the necessity of the second condition of \cref{main_theorem}.
Recall that the datum of a cover $S\to T$ of degree $d$ branched over $x_1, \ldots, x_l$ amounts to that of a conjugacy class of representation \[\pi_1(T\setminus \{x_1, \ldots, x_l\})\to \mathfrak S_d,\] see for example \cite[Section 2]{BE79} for an interpretation of the work of Hurwitz \cite{H01}. One can thus move around the $x_i$ to change the periods of paths joining zeroes in different equivalence classes.
Let us denote by $\Psi(X, \omega)$ the partition $\Psi(\chi)$, where $\chi$ is the period map of $(X, \omega)$.

\begin{lemma}\label{lemma:wiggle_branched_points}
Let $(X_0, \omega_0)$ be an abelian differential with discrete absolute periods spanning $\Lambda$. Denote by $\mathcal C$ the connected component of its stratum.
Let $(x_i)_{i\leqslant l}\in T^l$, where $T = \C/\Lambda$, be the points over which $X_0\to T$ is branched.
For any $(z_i)_{i\leqslant l}$ pairwise distinct points in $T$, there exists $(X, \omega)\in \mathcal C$ with the same area as $(X_0, \omega_0)$, absolute periods spanning $\Lambda$ such that $X\to T$ is branched over $z_i$ and $\Psi(X, \omega) = \Psi(X_0,  \omega_0)$.
\end{lemma}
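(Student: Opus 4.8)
The plan is to realise the passage from $(x_1,\dots,x_l)$ to $(z_1,\dots,z_l)$ by moving the branch points continuously, so that $(X_0,\omega_0)$ is deformed along a path inside its stratum which, by connectedness, stays inside $\mathcal C$. First I would produce the motion of the branch points. Since $T$ is a connected surface of real dimension two, the ordered configuration space $\mathrm{Conf}_l(T)$ of $l$ pairwise distinct points of $T$ is path-connected; hence there is a continuous path $t\mapsto\big(\gamma_1(t),\dots,\gamma_l(t)\big)$, $t\in[0,1]$, of $l$-tuples of distinct points of $T$ with $\gamma_i(0)=x_i$ and $\gamma_i(1)=z_i$, and by the isotopy extension theorem it is covered by an ambient isotopy $\phi_t\colon T\to T$ with $\phi_0=\mathrm{id}$ and $\phi_t(x_i)=\gamma_i(t)$. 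Write $B_0=\{x_1,\dots,x_l\}$ and $B_t=\phi_t(B_0)$.

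Next I would transport the cover. Let $p_0\colon X_0\to T$ be the branched cover associated to $(X_0,\omega_0)$ as recalled in \cref{sec:reminder}, of degree $d=V(\chi_0)/\mathrm{Area}(\C/\Lambda)$, where $\chi_0$ is the period map of $(X_0,\omega_0)$, and let $\rho_0\colon\pi_1(T\setminus B_0)\to\mathfrak S_d$ be its monodromy. Restricting $p_0$ to a genuine covering $X_0\setminus p_0^{-1}(B_0)\to T\setminus B_0$, the homeomorphism $\phi_t$ turns it into a covering of $T\setminus B_t$, whose fibrewise completion is a branched cover $p_t\colon X_0\to T$ over $B_t$; note that the underlying topological surface does not change, only the branched covering map does. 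Pulling back $dz$ gives $\omega_t=p_t^*\,dz$. Since the monodromy of $p_t$ around $\gamma_i(t)$ is conjugate to $\rho_0$ around $x_i$, the branch data of $p_t$ does not depend on $t$; in particular every $(X_0,\omega_t)$ lies in the same stratum as $(X_0,\omega_0)$, and $\Psi(X_0,\omega_t)=\Psi(X_0,\omega_0)$ because $\Psi$ is read off from the branch data. Moreover, on absolute homology $(p_t)_*$ is locally constant hence constant, so the absolute periods of $\omega_t$ coincide with those of $\omega_0$: they span $\Lambda$, and $(X_0,\omega_t)$ has area $d\cdot\mathrm{Area}(\C/\Lambda)$, the same as $(X_0,\omega_0)$.

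Finally I would conclude by a connectedness argument. Against a fixed marking of $\big(X_0,p_0^{-1}(B_0)\big)$, the relative periods of $\omega_t$ are integrals of $dz$ over paths in $T$ whose endpoints move continuously with $t$; hence $t\mapsto(X_0,\omega_t)$ is a continuous path in $\widetilde{\Omega\mathcal M_g}$, contained in a single stratum. Since $\mathcal C$ is open and closed in that stratum and contains the starting point $(X_0,\omega_0)$, the endpoint $(X,\omega):=(X_0,\omega_1)$ lies in $\mathcal C$; it is branched over $z_1,\dots,z_l$, has the same area as $(X_0,\omega_0)$, absolute periods spanning $\Lambda$, and $\Psi(X,\omega)=\Psi(X_0,\omega_0)$, which is exactly what is required.

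I expect the one genuinely technical point to be the continuity of $t\mapsto(X_0,\omega_t)$ in the moduli space: one has to check that transporting the monodromy representation along the ambient isotopy $\phi_t$ produces an honest continuous path of flat structures, and not merely a path of isomorphism classes. Keeping the topological total space fixed and letting only the branched covering map $p_t$ vary makes this transparent, since the relative periods of $p_t^*\,dz$ are then visibly continuous functions of $t$; with this in hand, the remaining verifications (invariance of the branch data, of the absolute periods, of the area and of $\Psi$) are immediate.
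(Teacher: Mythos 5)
Your proof is correct and follows essentially the same route as the paper: both move the branch points along a path in $\mathrm{Conf}_l(T)$ and transport the branched cover along it, so that the branch data, the absolute periods, the area and $\Psi$ stay constant, and connectedness of the resulting path in the stratum keeps the endpoint in $\mathcal C$. Your ambient-isotopy construction via the isotopy extension theorem is just an explicit realisation of the lift of this path to the fibration over $\mathrm{Conf}_l(T)$ with fibre $\mathrm{Hom}(\pi_1(T\setminus B),\mathfrak S_d)/\mathfrak S_d$ that the paper invokes directly.
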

\begin{proof}
Let $R\to \mathrm{Conf_l}(T)$ be the fibration over the configuration space of the torus \[\mathrm{Conf_l}(T) = \{(x_1, \ldots, x_l)\in T\mid x_i\neq x_j \text{ if } i\neq j\}\] whose fiber over $B = (x_1, \ldots, x_l)$ is $\mathrm{Hom}(\pi_1(T\setminus B), \mathfrak{S}_d) / \mathfrak S_d$.
Let $\gamma\colon I\to \mathrm{Conf}_l(T)$ be a path from $(x_i)_i$ to $(z_i)_i$, where $I = [0, 1]$.
We can lift $\gamma$ to a path $\widetilde \gamma\colon I\to R$ such that $\widetilde \gamma(0)$ corresponds to $X_0\to T$.
The branched cover $\widetilde \gamma(1)$ defines an abelian differential $(X, \omega)$ satisfying the requirements.
\end{proof}
Observe that this dictionary between tori covers and abelian differentials with discrete absolute periods shows that \cref{thm:tori_covers} is equivalent to \cref{main_theorem} for representations with image of $H_1(S, \Z)$ discrete.
\section{Action of the mapping class group}\label{sec:MCG}
Let $\mathcal C$ be a connected component of the stratum $\mathcal H(n_1, \ldots, n_k)$, and $Z\subset S$ of size $k$.
Denote by $H(\mathcal C)$ the set of $\chi\in H^1(S, Z, \C)$ that arise as relative periods of abelian differentials in $\mathcal C$.
In other words, $H(\mathcal C)$ is the set of $\chi$ such that there exists a complex structure $X$ on $S$ and a holomorphic one-form $\omega$ on $X$ with zeroes at $Z$ such that $\chi(\gamma) = \int_{\gamma} \omega$ for every $\gamma\in H_1(S ,Z, \Z)$.
The periods are well-known to give a local homeomorphism between $\mathcal C$ and $H^1(S, Z, \C)$, see for example \cite[Proposition 6.3]{Yoccoz10}.
In particular $ H(\mathcal C)$ is open.
We often identify $H^1(S, Z, \C)$ with the set of representations $H_1(S, Z, \Z)\to \mathbb R^2$.
It carries an action of $\Mod(S, Z)$ by precomposition and an action of $\mathrm{GL}_2^+(\R)$ by postcomposition, where $\Mod(S, Z)$ is the group of isotopy classes of homeomorphisms of $S$ stabilising the set $Z$. 
\begin{lemma}
The set $H(\mathcal C)\subset H^1(S, Z, \C)$ is open and is invariant by both the action of $\Mod(S, Z)$ and $\mathrm{GL}_2^+(\R)$.
\end{lemma}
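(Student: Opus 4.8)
The plan is to prove the two invariance statements separately, since openness was already explained above (the period map is a local homeomorphism onto $H^1(S,Z,\C)$, so the image $H(\mathcal C)$ of the open set $\mathrm{Per}^{-1}(\widetilde{\mathcal C})$ is open).

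For the $\mathrm{GL}_2^+(\R)$-action: given $(X,\omega)\in\mathcal C$ with period map $\chi$ and $g\in\mathrm{GL}_2^+(\R)$, I would produce a new abelian differential whose period map is $g\circ\chi$. Writing $\omega = \omega_x + i\,\omega_y$ with $\omega_x,\omega_y$ the real and imaginary parts, the pair $(\omega_x,\omega_y)$ defines a pair of real cohomology classes (equivalently a flat structure on $S$), and postcomposing the period map by $g$ simply replaces $(\omega_x,\omega_y)$ by $g\cdot(\omega_x,\omega_y)$. The standard fact is that this is realised by the usual $\mathrm{GL}_2^+(\R)$-action on translation surfaces: one pulls back the affine structure by the linear map $g^{-1}$ to obtain a new Riemann surface $X'$ and holomorphic one-form $\omega'$ with period map $g\circ\chi$, still with the same zero orders and same multiplicities at $Z$, hence still in $\mathcal H(n_1,\ldots,n_k)$. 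Since the $\mathrm{GL}_2^+(\R)$-action on translation surfaces is continuous and $\mathrm{GL}_2^+(\R)$ is connected, $(X',\omega')$ lies in the same connected component $\mathcal C$. This gives $g\circ\chi\in H(\mathcal C)$.

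For the $\Mod(S,Z)$-action: let $\phi\in\Mod(S,Z)$ be represented by an orientation-preserving homeomorphism $h\colon S\to S$ fixing $Z$ setwise, and let $(X,\omega)\in\mathcal C$ with marking $f\colon S\to X$ sending $Z$ to the zeroes, realising $\chi$. Then $f\circ h^{-1}\colon S\to X$ is another marking sending $Z$ to the zeroes of $\omega$, and its period map is $\gamma\mapsto \int_{f(h^{-1}(\gamma))}\omega = (\phi^{-1}\cdot\chi)(\gamma)$ — more carefully, changing the marking by $\phi$ precomposes the period map by the induced automorphism of $H_1(S,Z,\Z)$, which is exactly the action on $H^1(S,Z,\C)$ by precomposition. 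Since $(X,\omega)$ is the same underlying abelian differential, only the marking has changed, it still lies in $\mathcal C$ (the connected component is a property of $(X,\omega)$, independent of marking). Hence $\phi\cdot\chi\in H(\mathcal C)$, and since $\phi$ was arbitrary this shows invariance. One should be a little careful about the direction of the action (pre- versus post-composition, and $\phi$ versus $\phi^{-1}$), but since the statement asks only for invariance under the full group $\Mod(S,Z)$, the precise convention is immaterial: both $\phi$ and $\phi^{-1}$ range over the whole group.

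The only genuinely substantive point — and the one I would be most careful about — is the claim that the $\mathrm{GL}_2^+(\R)$-action on $\Omega\mathcal M_g$ preserves each connected component of each stratum. This is immediate from the connectedness of $\mathrm{GL}_2^+(\R)$ together with the continuity of the action, once one knows the action preserves the stratum $\mathcal H(n_1,\ldots,n_k)$ itself, which holds because a linear change of flat chart does not alter the cone angle, hence not the order, of any singularity, nor does it move the marked points; I would just cite the standard references on the $\mathrm{GL}_2^+(\R)$-action. Everything else is bookkeeping about how a change of marking or a postcomposition affects the period map, which is routine.
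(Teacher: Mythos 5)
Your argument is correct and is exactly the standard justification the paper leaves implicit: it states this lemma without proof, relying on the local-homeomorphism property of $\mathrm{Per}$ for openness, invariance of the marking-change description under $\Mod(S,Z)$, and the usual $\mathrm{GL}_2^+(\R)$-action on translation surfaces (which preserves zero orders and, by connectedness of $\mathrm{GL}_2^+(\R)$, each connected component). Nothing in your write-up deviates from or adds a gap to that intended reasoning.
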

\subsection{Dynamics on the absolute parts}
We will use the work of Kapovich \cite{K20} and Calsamiglia-Deroin-Francaviglia \cite{CDF23} to derive properties of the closures of orbits of the mapping class group action on representations $\chi\colon H_1(S, Z, \Z)\to \C$.
The mapping class goup $\Mod(S)$ is the group of isotopy classes of homeomorphisms of $S$.
The natural forgetful map $\Mod(S, Z)\to \Mod (S)$ is surjective.
\begin{prop}\label{prop:MCG_generic}
Suppose that $g\geqslant 3$.
Let $\chi\in H^1(S, \C)$ be such that $V(\chi) > 0$, $\chi(H_1(S, \mathbb C))$ is not a lattice, and $N > 0$.
There exists $\chi_\infty$ in the closure of the $\mathrm{Mod}(S)$-orbit of $\chi$ such that both
\begin{enumerate}
    \item The image $\Lambda$ of $\chi$ is a lattice,
    \item The inequality $V(\chi) \geqslant N\cdot\mathrm{Area}(\C/\Lambda)$ holds.\label{C2}
\end{enumerate}
\end{prop}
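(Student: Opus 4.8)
The plan is to reduce the proposition to the description of $\Mod(S)$-orbit closures on $H^1(S,\C)$ obtained by Kapovich \cite{K20} and Calsamiglia--Deroin--Francaviglia \cite{CDF23}, after exhibiting by hand a suitable lattice representation on the level set $\{V = V(\chi)\}$. First I would record two elementary observations. Since $V$ is $\Mod(S)$-invariant and continuous, it is constant, equal to $V(\chi)$, on the closure of the $\Mod(S)$-orbit of $\chi$; so any $\chi_\infty$ in that closure automatically satisfies $V(\chi_\infty) = V(\chi)$, which is why it suffices to produce $\chi_\infty$ with the stated properties on this one level set. Moreover, since $V(\chi) > 0$ the subgroup $\chi(H_1(S,\Z)) \subset \C$ cannot be contained in a real line (otherwise the imaginary part of $\chi$ would be a real multiple of its real part and $V(\chi)$ would vanish); as it is moreover not a lattice, it must be a non-discrete subgroup of $\C$.

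Next I would build the target representation explicitly. Fix a symplectic basis $(a_1, b_1, \ldots, a_g, b_g)$ of $H_1(S,\Z)$, choose an integer $d \geq N$, and set $A := V(\chi)/d > 0$; let $\Lambda = \Z v_1 + \Z v_2 \subset \C$ be a lattice with $\det(v_1, v_2) = A$. Using $g \geq 3$, define $\chi_\infty \in H^1(S,\C)$ by $\chi_\infty(a_1) = v_1$, $\chi_\infty(b_1) = d\, v_2$, $\chi_\infty(a_2) = v_2$ and $\chi_\infty = 0$ on $b_2, a_3, b_3, \ldots, b_g$. Then the image of $\chi_\infty$ is $\Z v_1 + \Z (d v_2) + \Z v_2 = \Lambda$, a lattice of covolume $A$, and the formula $V(\chi_\infty) = \sum_i \det(\chi_\infty(a_i), \chi_\infty(b_i))$ gives $V(\chi_\infty) = \det(v_1, d v_2) = dA = V(\chi)$ by the choice of $A$. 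Thus $\chi_\infty$ meets condition (1), and since $V(\chi) = d\cdot \mathrm{Area}(\C/\Lambda) \geq N\cdot \mathrm{Area}(\C/\Lambda)$ it meets condition (2) as well.

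Finally I would check that $\chi_\infty$ lies in the closure of the $\Mod(S)$-orbit of $\chi$. If the cited dynamical results give directly that, for $g \geq 3$, the $\Mod(S)$-orbit closure of a representation with $V>0$ and non-discrete image is the whole level set $V^{-1}(V(\chi))$, this is immediate since $\chi_\infty$ lies on that level set. If instead one only has density of such an orbit in the locus of non-discrete (or dense) representations of the level set, I would bridge the gap by approximating $\chi_\infty$ within $V^{-1}(V(\chi))$: for irrational $t$, let $\chi_\infty^{(t)}$ agree with $\chi_\infty$ except that $\chi_\infty^{(t)}(b_1) = d v_2 + t v_1$. Then $V(\chi_\infty^{(t)}) = \det(v_1, d v_2 + t v_1) = dA = V(\chi)$, while the image of $\chi_\infty^{(t)}$ contains all $(m + n t) v_1$ with $m,n \in \Z$ and is therefore non-discrete; hence $\chi_\infty^{(t)} \in \overline{\Mod(S)\cdot\chi}$ for every such $t$, and letting $t \to 0$ and using that this set is closed gives $\chi_\infty \in \overline{\Mod(S)\cdot\chi}$. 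The only genuine difficulty is concentrated in this last step: everything rests on the dynamical input of \cite{K20, CDF23}, whose delicate point is precisely the behaviour of orbits near the lattice locus of the level set, and one must make sure — via the perturbation above if necessary — that the orbit of a non-lattice representation really does accumulate on lattice representations of arbitrarily small covolume.
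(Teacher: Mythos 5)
The weak point of your argument is exactly the step you yourself flag as ``the only genuine difficulty'': you never pin down what the dynamical input actually says, and both of the hypotheses you offer are false in general, so the bridge collapses. The result the paper invokes (\cite[Proposition 3.10]{CDF23}) does \emph{not} say that the $\Mod(S)$-orbit closure of a non-lattice $\chi$ with $V(\chi)>0$ is the whole level set $V^{-1}(V(\chi))$, nor that it contains every non-discrete representation on that level set. It says that \emph{after replacing $\chi$ by $A\cdot\chi$ for a suitable $A\in\mathrm{GL}_2^+(\R)$}, the closure contains the specific set of $\psi$ with $V(\psi)=1$ and $\Im\circ\psi$ of image $\Z$. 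The constraint is real: if, say, $\Im\circ\chi(H_1(S,\Z))=\Z$ while $\Re\circ\chi$ has dense image (such $\chi$ satisfies your hypotheses, since its image is non-discrete), then for every $\varphi\in\Mod(S)$ one has $\Im\circ(\varphi\cdot\chi)(H_1(S,\Z))\subset\Z$, and since $\Z$ is closed this persists in the closure. Consequently an arbitrary lattice $\Lambda=\Z v_1+\Z v_2$ of covolume $V(\chi)/d$, as you choose, is in general unreachable, and so are your perturbed representations $\chi_\infty^{(t)}$: having non-discrete image is not sufficient to lie in the orbit closure, because their imaginary parts need not take values in $\Z$ (or in whatever closed subgroup is forced by $\chi$). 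So the claim ``$\chi_\infty^{(t)}\in\overline{\Mod(S)\cdot\chi}$ for every irrational $t$'' is exactly as strong as what you are trying to prove and is false for intermediate-type $\chi$.

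The repair is what the paper does: first normalise by $\mathrm{GL}_2^+(\R)$ (harmless, since both $V$ and the covolume of the image scale by $\det A$, so the inequality $V\geqslant N\cdot\mathrm{Area}(\C/\Lambda)$ is $\mathrm{GL}_2^+$-invariant and one can transport the conclusion back through $A^{-1}$), then choose the target \emph{inside} the accessible set provided by \cite[Proposition 3.10]{CDF23}, i.e.\ among $\psi$ with $V(\psi)=1$ and $\Im\circ\psi$ of image $\Z$; for instance $\psi(a_1)=1-\tfrac1N$, $\psi(a_2)=\tfrac1N$, $\psi(b_1)=\psi(b_2)=i$ and $\psi=0$ on the remaining basis elements, whose image is the lattice $\tfrac1N\Z+i\Z$ of covolume $\tfrac1N$. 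Your explicit construction of a lattice representation with prescribed $V$ and covolume is fine arithmetically, but it must be carried out within this constrained family rather than for an arbitrary lattice, and no approximation argument is then needed.
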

\begin{proof}
Replacing $\chi$ with $A\cdot\chi$ for some $A\in\mathrm{GL}_2^+(\R)$, we can assume that the closure of $\mathrm{Mod}(S)\cdot\chi$ contains all the $\psi$ such that $V(\psi) = 1$ and $\Im \circ\psi$ has image $\Z$, see \cite[Proposition 3.10]{CDF23}.
We can define $\psi$ by, for example, $\psi(a_1) = 1 - \frac{1}{N}$, $\psi(a_2) = \frac{1}{N}$, $\psi(b_i) = i$ for $1\leqslant i\leqslant 2$, and $\psi(\gamma) = 0$ for $\gamma \in \{a_3, b_3, \ldots, a_g, b_g\}$.
\end{proof}
We can now prove \cref{main_theorem} for the $\chi\in H^1(S, Z, \C)$ such that $\Lambda = \chi( H_1(S, \Z))$ is not a lattice, assuming it holds for the ones where $\Lambda$ is a lattice and $g \geqslant 3$.
\begin{proof}
If $\chi(H_1(S, \Z)$ is not a lattice, then there exists $\chi_\infty$ in the closure of the $\Mod(S, Z)$-orbit of $\chi$ that is in $H(\mathcal C)$ by \cref{prop:MCG_generic} and \cref{main_theorem} in the lattice case. 
Since $H(\mathcal C)$ is open, the whole orbit of $\chi$ is in $H(\mathcal C)$.
\end{proof}
For the genus two case, we use an analysis of the author in \cite[Section 4]{LF22}.
\begin{lemma}\label{lemme:MCGgenus2}
Suppose that $g = 2$ and let $\chi\in H^1(S, \Z)$ be such $V(\chi) > 0$ and $\chi(H_1(S, \Z))$ is not a lattice.
There exist $A\in \mathrm{GL}_2^+(\R)$ and  $(a_1, b_1, a_2,b_2)$ a symplectic basis of $H_1(S, \Z)$ such that after replacing $\chi$ with $A\cdot\chi$,
\begin{enumerate}
    \item $\det\left (\chi(a_i), \chi(b_i)\right ) > 0$ for $1\leqslant i\leqslant 2$,
    \item The numbers $\Re(\chi(a_1))$ and $\Re(\chi(b_2))$ generate a dense group in $\R$.
\end{enumerate}
\end{lemma}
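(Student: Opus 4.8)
The plan is to leverage the genus-two analysis of the $\mathrm{Mod}(S)$-action on $H^1(S,\C)$ from \cite[Section 4]{LF22}, together with the postcomposition freedom given by $\mathrm{GL}_2^+(\R)$. First I would use $\mathrm{GL}_2^+(\R)$ to normalise the absolute period map: since $V(\chi)>0$ the form $\omega_\chi = \Re\chi \wedge \Im\chi$ is a positive area form on $H^1(S,\R)$, so after postcomposing with a suitable linear map I may assume that $\Im\circ\chi$ takes prescribed values, say $\Im\circ\chi$ has image lying in whatever rank-one or rank-two subgroup is convenient; the key point is that the \emph{non-lattice} hypothesis means $\chi(H_1(S,\Z))$ is a dense subgroup of $\C$, hence its image under either coordinate projection, or more relevantly the pairing of real and imaginary parts with a fixed class, is dense in $\R$ for a generic choice of symplectic basis.

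Next I would recall from \cite[Section 4]{LF22} the orbit-closure statement analogous to \cref{prop:MCG_generic}: in genus two the $\mathrm{Mod}(S)$-orbit closure of any $\chi$ with $V(\chi)>0$ and $\chi(H_1(S,\Z))$ non-discrete contains, after a $\mathrm{GL}_2^+(\R)$-normalisation, a large explicit family of representations (those with $V(\psi)=1$ and $\Im\circ\psi$ of a controlled form). Within that family, or directly by acting on $\chi$ by mapping classes, I would arrange a symplectic basis $(a_1,b_1,a_2,b_2)$ so that the two ``diagonal blocks'' $\det(\chi(a_i),\chi(b_i))$ are each strictly positive — this is possible because $V(\chi)=\sum_i \det(\chi(a_i),\chi(b_i))>0$ and the symplectic group acts transitively enough on such decompositions to push all the positivity into each block individually (one can first pick a Lagrangian on which $\Re\chi$ and $\Im\chi$ are independent, complete it to a symplectic basis, and then apply elementary symplectic transvections mixing $a_1$ with $a_2$ and $b_1$ with $b_2$ to balance the determinants). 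This establishes condition~(1).

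For condition~(2), I would exploit that the free $\Z$-module generated by $\Re(\chi(a_1))$ and $\Re(\chi(b_2))$ is dense in $\R$ as soon as these two real numbers are rationally independent, or one of them is irrational of the right type. Since $\chi(H_1(S,\Z))$ is dense in $\C$, its image under the real-part functional $\Re$ composed with evaluation is dense in $\R$; the remaining task is to transfer this density into a single pair of basis elements. I would do this by choosing the symplectic basis so that $a_1$ and $b_2$ together span a subgroup of $H_1(S,\Z)$ on which $\Re\circ\chi$ already has dense image — using that if $\Re\circ\chi$ had image contained in a cyclic group on every rank-two sublattice spanned by $a_1,b_2$ as the basis varies, then $\Re\circ\chi$ would have discrete image overall, forcing $\chi(H_1(S,\Z))$ into a proper closed subgroup of $\C$ (a line or a lattice), contradicting the hypothesis. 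Finally I would check that the basis changes used for~(2) can be chosen among symplectic automorphisms that preserve the sign conditions in~(1), or else re-apply the balancing step.

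The main obstacle I anticipate is the simultaneous control: arranging both the two positivity conditions in~(1) and the density condition in~(2) with \emph{the same} symplectic basis and \emph{the same} $A\in\mathrm{GL}_2^+(\R)$. The positivity is an open condition on the basis while density is a ``generic'' (comeagre) condition, so in principle they are compatible, but making this rigorous requires knowing that the set of symplectic bases achieving~(1) is not contained in the measure-zero/meagre exceptional set for~(2). I expect this is exactly where the detailed genus-two orbit analysis of \cite[Section 4]{LF22} is needed — it presumably provides an explicit normal form from which both conditions can be read off directly, rather than an abstract genericity argument.
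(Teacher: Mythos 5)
There is a genuine gap in your argument for condition~(2). You assert that the non-lattice hypothesis makes $\chi(H_1(S,\Z))$ \emph{dense} in $\C$, and later that if $\Re\circ\chi$ had discrete image then $\chi(H_1(S,\Z))$ would lie in ``a line or a lattice''. Both claims are false: a non-discrete subgroup of $\C$ with $V(\chi)>0$ can perfectly well have discrete real-part projection, e.g.\ a group of the form $\Z + i\bigl(\Z+\sqrt{2}\,\Z\bigr)$, whose closure is $\Z+i\R$ — neither a line nor a lattice, and with dense projection only on the imaginary axis. For such a $\chi$ no change of symplectic basis alone can make $\Re(\chi(a_1))$ and $\Re(\chi(b_2))$ generate a dense group; you must first use the $\mathrm{GL}_2^+(\R)$ (e.g.\ a rotation) to move the ``non-discrete direction'' onto the real axis. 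Your proposal mentions the $\mathrm{GL}_2^+(\R)$ freedom at the start but never uses it at this crucial point, so the density step as written fails. The correct route (the one the paper takes) is: after quoting \cite[Section 4]{LF22} for condition~(1), normalise with $A\in\mathrm{GL}_2^+(\R)$ so that $\chi(a_1)=1$, $\chi(b_1)=i$; observe that non-discreteness forces at least one coordinate projection of the image to be dense (if both were discrete the group would be discrete), and if it is the imaginary one, apply a rotation together with the symplectic substitution $(a_1,b_1)\mapsto(-b_1,a_1)$ to make it the real one; then density of the group generated by $1=\Re(\chi(a_1))$, $\Re(\chi(a_2))$, $\Re(\chi(b_2))$ forces one of the last two to be irrational, and the substitution $(a_2,b_2)\mapsto(b_2,-a_2)$ puts it at $b_2$.

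Your worry about simultaneously preserving condition~(1) is real but resolves easily, and you leave it hanging (``or else re-apply the balancing step''): the only moves needed for~(2) are post-composition by elements of $\mathrm{GL}_2^+(\R)$, which multiply each $\det(\chi(a_i),\chi(b_i))$ by a positive scalar, and the swaps $(a_1,b_1)\mapsto(-b_1,a_1)$, $(a_2,b_2)\mapsto(b_2,-a_2)$, which leave these determinants unchanged — so no genericity or re-balancing argument is required. Finally, your sketch for~(1) via symplectic transvections is not a proof; deferring to \cite[Section 4]{LF22} for that part, as the paper does, is fine, but the transvection heuristic should not be presented as a substitute.
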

\begin{proof}
The first condition is proven in \cite[Section 4]{LF22}.
Let us prove the second one, assuming that $\det(\chi(a_i), \chi(b_i)) > 0$ for $i\in \{1,2\}$.
We can apply a matrix $A$ to assume that $\chi(a_1) = 1$ and $\chi(b_1) = i$.
Since $\chi(H_1(S, \Z))$ is not discrete, its projection on the real or imaginary line must be dense. 
If necessary, we can apply a rotation of angle $\frac \pi 4$ and then change $(a_1, b_1, a_2,b_2)$ to $(-b_1, a_1, a_2, b_2)$, in order to ensure that it is the case for the real line.
The group generated by $\Re(\chi(a_1))$,  $\Re(\chi(a_2))$ and $\Re(\chi(b_2))$ is dense, so one of $\Re(\chi(a_2))$ and $\Re(\chi(b_2))$ is not rational.
We may replace $(a_1, b_1, a_2,b_2)$ with $(a_1, b_1, b_2,-a_2)$ if necessary so that $\Re(\chi(a_1))$ and $\Re(\chi(b_2))$ generate a dense group.
\end{proof}

In \cite[Lemma 9.1]{CDF23}, Calsamiglia, Deroin and Francaviglia prove that the mapping class group acts transitively on the $\chi\in H^1(S, \mathbb C)$ such that $V(\chi) >0$ is fixed with image  a given lattice.
\begin{lemma}[Calsamiglia-Deroin-Francaviglia]\label{prop:MCG_covers}
The mapping class group $\Mod(S)$ acts transitively on the set of  $\chi\in H^1(S, \C)$ such that:
\begin{enumerate}
    \item the image of $\chi$ is a given lattice $\Lambda$,
    \item the ratio $V(\chi) / \mathrm{Vol}(\mathbb C/\Lambda)$ is a fixed integer $d$.
\end{enumerate}
\end{lemma}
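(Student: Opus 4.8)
The plan is to translate the statement into a transitivity statement for the integral symplectic group and to settle the latter by linear algebra over $\Z$.

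First I would fix a positively oriented basis $(\lambda_1,\lambda_2)$ of $\Lambda$ and write $\chi=\chi_1\lambda_1+\chi_2\lambda_2$ with $\chi_1,\chi_2\in H^1(S,\Z)$. The image of $\chi$ equals $\Lambda$ precisely when the map $H_1(S,\Z)\to\Z^2$, $\gamma\mapsto(\chi_1(\gamma),\chi_2(\gamma))$, is onto, equivalently when $\chi_1$ and $\chi_2$ span a rank two direct summand of $H^1(S,\Z)$; call such a pair \emph{unimodular}. Expanding the formula $V(\psi)=\sum_i\det(\psi(a_i),\psi(b_i))$ of \cref{sec:reminder} in a symplectic basis $(a_1,b_1,\dots,a_g,b_g)$ gives
\[
V(\chi)=\mathrm{Area}(\C/\Lambda)\sum_{i=1}^{g}\bigl(\chi_1(a_i)\chi_2(b_i)-\chi_1(b_i)\chi_2(a_i)\bigr),
\]
and the sum on the right is the cup-product number $(\chi_1\cup\chi_2)[S]=\langle\chi_1,\chi_2\rangle$, so the hypothesis on $V(\chi)/\mathrm{Vol}(\C/\Lambda)$ amounts to $\langle\chi_1,\chi_2\rangle=d$. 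The action of $\Mod(S)$ on $H^1(S,\C)$ keeps $\chi_1,\chi_2$ integral and factors through the action of $\mathrm{Sp}(2g,\Z)$ on $H^1(S,\Z)\cong H_1(S,\Z)$; moreover the representation $\Mod(S)\to\mathrm{Sp}(2g,\Z)$ is surjective, since Dehn twists already map onto a generating set of symplectic transvections. Thus it suffices to prove that $\mathrm{Sp}(2g,\Z)$ acts transitively on unimodular pairs $(u,v)$ of elements of $H^1(S,\Z)$ with $\langle u,v\rangle=d$, where $\langle\cdot,\cdot\rangle$ denotes the cup-product pairing.

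For this I would transport everything to $H_1(S,\Z)$ through the $\mathrm{Sp}$-equivariant Poincar\'e duality and normalise $(u,v)$ in stages. Since $(u,v)$ is unimodular, $u$ is primitive, so by the classical transitivity of $\mathrm{Sp}(2g,\Z)$ on primitive vectors one may assume $u=a_1$. Writing $v=\sum_i(\alpha_ia_i+\beta_ib_i)$, the constraint reads $\beta_1=d$, and unimodularity of $(a_1,v)$ says that the reduction of $v$ modulo $\Z a_1$ is primitive, which forces $\gcd(d,\alpha_2,\beta_2,\dots,\alpha_g,\beta_g)=1$. Applying the copy of $\mathrm{Sp}(2g-2,\Z)$ that fixes $a_1$ and $b_1$, one brings the $\langle a_2,\dots,b_g\rangle$-part of $v$ to the form $e\,a_2$ with $\gcd(d,e)=1$; then, using the remaining elementary symplectic maps fixing $a_1$ (the shear $b_1\mapsto b_1+a_1$ and the unipotent ones coupling $b_1$ with the last $2g-2$ coordinates) together with a B\'ezout combination of $d$ and $e$, one clears $e$ and the $a_1$-coefficient, reaching a normal form depending only on $d$. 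This yields the transitivity, hence the lemma.

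The step I expect to be the main obstacle is this last normalisation: one has to verify that the elementary moves fixing $a_1$ realise every simplification permitted by unimodularity — in particular, that the relation $\gcd(d,e)=1$ can be exploited to absorb the $a_1$-coefficient — and to pin down a single normal form for each $d$, treating the isotropic case $d=0$ (which needs $g\geq2$) separately. Alternatively, this transitivity can be deduced from uniqueness of primitive embeddings of a fixed rank two lattice into the standard symplectic lattice of rank $2g\geq4$, at the cost of invoking structural results on symplectic lattices. Everything else — the passage from $\chi$ to $(\chi_1,\chi_2)$, the computation of $V(\chi)$, and the surjectivity of $\Mod(S)\to\mathrm{Sp}(2g,\Z)$ — is standard.
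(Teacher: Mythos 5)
You should first be aware that the paper does not prove this lemma at all: it is imported verbatim from \cite[Lemma 9.1]{CDF23}, so any complete argument is necessarily a different route from the paper's, and yours is a sound one. The translation is exactly right: in a positively oriented basis $(\lambda_1,\lambda_2)$ of $\Lambda$ the two hypotheses become the statement that $(\chi_1,\chi_2)$ is a unimodular pair in $H^1(S,\Z)$ with $\langle\chi_1,\chi_2\rangle=d$, transitivity on ordered pairs is what is needed, and since $\Mod(S)\to\Sp_{2g}(\Z)$ is onto, the lemma reduces to transitivity of $\Sp_{2g}(\Z)$ on such pairs. The normalisation you flag as the main obstacle does go through. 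After arranging $u=a_1$ and $v=\alpha_1a_1+db_1+ea_2$ with $e\geqslant 0$ and $\gcd(d,e)=1$ (the case $e=0$ forces $d=1$ and is immediate), choose $k$ with $ek\equiv\alpha_1\pmod d$ and $k\equiv 1\pmod e$ (possible by the Chinese remainder theorem since $\gcd(d,e)=1$), and apply the stabiliser element $a_1\mapsto a_1$, $b_1\mapsto b_1+kb_2$, $a_2\mapsto a_2-ka_1$, $b_2\mapsto b_2$: the $a_1$-coefficient of $v$ becomes $\alpha_1-ek\equiv 0\pmod d$ and is then killed by a power of the transvection along $a_1$, while the component of $v$ in $\langle a_2,\dots,b_g\rangle$ becomes $ea_2+dkb_2$, which is primitive there because $\gcd(e,dk)=\gcd(e,k)=1$, so the copy of $\Sp_{2g-2}(\Z)$ fixing $a_1,b_1$ takes it to $a_2$. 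Every pair is therefore equivalent to $(a_1,\,db_1+a_2)$, a normal form depending only on $d$; the isotropic case $d=0$ that you set aside never occurs in the paper's application, where $V(\chi)>0$. In short, the citation buys the paper brevity, while your argument gives a self-contained, purely arithmetic proof at the level of the symplectic lattice; with the step above written out it is complete.
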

\subsection{Dynamics on the relative parts}
Now that we have analysed the action of $\Mod(S)$ on the $\chi\in H^1(S, \C)$, let us focus on the action of $\Mod(S, Z)$ on the elements $\chi\in H^1(S, Z, \C)$.
Denote by $r\colon H^1(S, Z, \C)\to H^1(S, \C)$ the restriction map.
We observe that with point-pushing maps, see \cite[Chapter 4.2]{FarbMargalit},  one can change $\chi$ by adding any element of its absolute image $\chi\left (H_1\left (S, \Z\right )\right )$ to its relative parts.
\begin{lemma}[Point-Pushing]\label{lemma:point_pushing}
Let $\chi\in H^1(S, Z, \mathbb C)$ and $\psi\in \ker r$ be such that its image is included in $\chi\left (H_1(S, \Z)\right )$. The representation $\chi + \psi$ is in the $\mathrm{Mod}(S, Z)$-orbit of $\chi$.
\end{lemma}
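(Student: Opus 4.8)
The plan is to realise the representation $\chi + \psi$ as a precomposition of $\chi$ by a point-pushing mapping class, using the Birman exact sequence. Recall that $\psi \in \ker r$ means $\psi$ vanishes on absolute homology, so $\psi$ factors through the boundary map $H_1(S, Z, \Z) \to \widetilde{H}_0(Z, \Z)$; concretely $\psi$ is determined by the values $\psi(\beta_{x})$ for paths $\beta_x$ from a fixed basepoint $x_0 \in Z$ to the other points $x \in Z$, subject to no constraint beyond being an element of $\C^{k-1}$. The hypothesis is that all these values lie in $\Lambda := \chi(H_1(S, \Z))$.

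First I would set up the point-pushing subgroup. Fix a point $x \in Z$ and recall from \cite[Chapter 4.2]{FarbMargalit} that the Birman exact sequence gives a homomorphism $\mathrm{Push}\colon \pi_1(S \setminus (Z \setminus \{x\}), x) \to \Mod(S, Z)$, sending a loop $\gamma$ based at $x$ to the mapping class that pushes the marked point $x$ once around $\gamma$. Since we may move one point of $Z$ at a time and compose, it suffices to treat a single $x$ and a single element $\gamma \in \pi_1(S \setminus (Z\setminus\{x\}), x)$, then iterate over the points of $Z$. The key computation is the effect of $\mathrm{Push}(\gamma)$ on a relative cocycle: for a path $\delta$ ending at $x$ (and otherwise disjoint from the other marked points), pushing $x$ along $\gamma$ replaces $\delta$ by the concatenation $\delta \cdot \gamma^{\pm 1}$ up to homology, while paths not ending at a point being moved are unaffected up to absolute classes. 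Hence $(\mathrm{Push}(\gamma)^* \chi)(\delta) = \chi(\delta) \pm \chi(\gamma)$, and on absolute homology $\mathrm{Push}(\gamma)^*$ acts trivially modulo the relevant corrections — more precisely, since $\gamma$ viewed in $H_1(S, \Z)$ is some class $[\gamma]$, we get $\mathrm{Push}(\gamma)^* \chi = \chi + \psi_\gamma$ where $\psi_\gamma \in \ker r$ has $\psi_\gamma(\beta_x) = \chi([\gamma])$ and $\psi_\gamma(\beta_{x'}) = 0$ for $x' \neq x$ (after adjusting the reference paths appropriately). I should double-check the signs and the precise interaction with absolute classes, but the upshot is that the point-pushing maps act on $\ker r$ by translation by elements of $\chi(H_1(S,\Z))^{k-1}$.

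Next, I would argue surjectivity: as $\gamma$ ranges over $\pi_1(S \setminus (Z \setminus \{x\}), x)$, the class $[\gamma] \in H_1(S, \Z)$ ranges over all of $H_1(S, \Z)$ (the map $\pi_1 \to H_1$ is surjective even after puncturing, since $g \geqslant 1$ or there are enough punctures — and in any case $H_1(S\setminus(Z\setminus\{x\}),\Z) \twoheadrightarrow H_1(S,\Z)$). Therefore $\chi([\gamma])$ ranges over all of $\Lambda$. Iterating over the $k - 1$ points $x \neq x_0$ and composing the corresponding point-pushing maps, we can add an arbitrary element of $\Lambda^{k-1} \cong \Lambda \otimes \widetilde{H}_0(Z, \Z)$ to the relative part of $\chi$. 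Since $\psi$ is by hypothesis exactly such an element, there is a product $\varphi$ of point-pushing maps with $\varphi^* \chi = \chi + \psi$, proving $\chi + \psi \in \Mod(S, Z) \cdot \chi$.

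The main obstacle I anticipate is bookkeeping the action precisely: one must pin down reference paths $\beta_x$ from $x_0$ to each $x \in Z$, verify that the point-pushing map along a loop based at $x$ changes $\chi(\beta_x)$ by exactly $\pm\chi([\gamma])$ and leaves $\chi(\beta_{x'})$ unchanged for $x' \neq x$ (modulo choices that can be absorbed), and confirm that the absolute periods $\chi|_{H_1(S,\Z)}$ are genuinely preserved — point-pushing maps act trivially on absolute homology, which is standard, so this is consistent. Handling the case $x = x_0$ (or simply declaring $x_0$ fixed and moving only the others) avoids any redundancy. None of these steps is deep; the content is entirely the translation action of the Birman kernel, so the proof should be short once the conventions are fixed.
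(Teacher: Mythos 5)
Your proposal is correct and follows essentially the same route as the paper: fix a basepoint $z_0\in Z$, and for each other $z\in Z$ push $z$ along a loop $\gamma$ with $\chi(\gamma)$ equal to the required value of $\psi$ on a reference path, which adds that value to the relative period at $z$ while leaving the absolute periods and the other relative periods unchanged. The extra bookkeeping you flag (signs, paths crossing the push region) is absorbed exactly as you suggest, since the push map is isotopic to the identity rel $Z\setminus\{z\}$.
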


\begin{proof}
Let $\chi$ and $\psi$ be such representations, and choose $z_0\in Z$. For $z\in Z\setminus\{z_0\}$, pick a path $\delta$ from $z_0$ to $z$.
There exists $\gamma\in \pi_1(S, z)$ such that $\psi(\delta) = \chi(\gamma)$.
We now act by the point-pushing map at $z$ following $\gamma$ to $\chi$.
This replaces $\chi(\delta)$ with $\chi(\delta) + \psi(\delta)$ and leaves unchanged $\chi(\alpha)$ for any path $\alpha$ joining $z_0$ to $z_1\neq z$.
We repeat this operation for each $z\neq z_0$, and replace $\chi$ with $\chi + \psi$.
\end{proof}
\subsection{Covers of the torus}
We can now present our main tool to prove \cref{main_theorem} in the case where $\chi(H_1(S, \Z))$ is a lattice.
\begin{prop}\label{prop:existence}
Let $\mathcal C$ be a connected component of a stratum $\mathcal H(n_1, \ldots, n_k)$ and $P$ be a partition of $k$. 
If there exists a translation surface $(X, \omega)$ in $\mathcal C$ of volume $d$ such that $\Psi(X, \omega) = P$ and its absolute periods span $\Z + i\Z$, then every $\chi\in H^1(S, Z, \C)$ is the period map of a surface in $\mathcal C$ if they satisfy the following:
\begin{enumerate}
    \item $\Psi(\chi) = P$,
    \item $\chi(H_1(S, \Z))$ is a lattice in $\C$ and its covolume $A$ satisfies $A\cdot d = V(\chi)$.
\end{enumerate}
\end{prop}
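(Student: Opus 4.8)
The strategy is to transport the single translation surface furnished by the hypothesis onto an arbitrary $\chi$ satisfying (1) and (2), using successively a normalisation by $\mathrm{GL}_2^+(\R)$, the branch-point wiggling of \cref{lemma:wiggle_branched_points}, the transitivity result \cref{prop:MCG_covers}, and finally the point-pushing of \cref{lemma:point_pushing}. Throughout I use that $H(\mathcal C)$ is open and invariant under both $\mathrm{GL}_2^+(\R)$ and $\Mod(S,Z)$, so that it suffices to realise any single element of the joint orbit of $\chi$.

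\emph{Normalisation.} Since $\Psi$ and conditions (1)–(2) are unchanged by post-composition with $\mathrm{GL}_2^+(\R)$, I may assume $\Lambda := \chi(H_1(S,\Z)) = \Z + i\Z$, so that $A = 1$ and $V(\chi) = d$. Using that $\Mod(S,Z)$ realises every permutation of $Z$, I may additionally assume that the $\sim$-classes of $\chi$ form a fixed partition $\mathcal A = \{A_1, \dots, A_l\}$ of $\{1, \dots, k\}$ of shape $P$. Fix $z_0 \in Z$ and a path $\delta_{z_i}$ from $z_0$ to each $z_i$ (with $\delta_{z_0}$ trivial). One checks that for $i \in A_j$ the class $\alpha_j := \chi(\delta_{z_i}) \bmod \Lambda \in T := \C/\Lambda$ depends only on $j$, and that $\alpha_1, \dots, \alpha_l$ are pairwise distinct.

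\emph{Transport.} Choose a marking of the surface $(X,\omega)$ of the hypothesis so that the partition of $Z$ by the fibres of the associated degree-$d$ branched cover $X \to T$ (every branch point of which is the image of a zero of $\omega$) is exactly $\mathcal A$; its period map $\chi_*$ then lies in $H(\mathcal C)$, has absolute image $\Lambda$, $V(\chi_*) = d$, and fibre partition $\mathcal A$. Applying \cref{lemma:wiggle_branched_points}, I move, for each $j$, the branch point over which the zeroes indexed by $A_j$ lie to the position $\alpha_j$, obtaining $(X_2, \omega_2) \in \mathcal C$ whose period map $\chi_2$ satisfies $\chi_2(\delta_{z_i}) \equiv \chi(\delta_{z_i}) \pmod \Lambda$ for all $i$, with absolute image still $\Lambda$ and $V(\chi_2) = d$. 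Now $r(\chi_2)$ and $r(\chi)$ are both surjections $H_1(S,\Z) \to \Z + i\Z$ with $V = d$, so by \cref{prop:MCG_covers} there is $\phi \in \Mod(S)$ with $\phi \cdot r(\chi_2) = r(\chi)$. The crucial point is to lift $\phi$ to some $\tilde\phi \in \Mod(S,Z)$ that fixes $Z$ pointwise: this is possible because the kernel of $\Mod(S,Z) \to \Mod(S)$ surjects onto $\mathfrak S_k$ (a homeomorphism supported in a disk containing $Z$ can induce any permutation of $Z$ while being trivial in $\Mod(S)$), hence the pure mapping class group already surjects onto $\Mod(S)$. Such a $\tilde\phi$ alters each $\delta_{z_i}$ only by an absolute loop, on which $\chi_2$ takes values in $\Lambda$; therefore $\chi_3 := \tilde\phi \cdot \chi_2 \in H(\mathcal C)$ still satisfies $\chi_3(\delta_{z_i}) \equiv \chi(\delta_{z_i}) \pmod \Lambda$ for all $i$, while $r(\chi_3) = \phi \cdot r(\chi_2) = r(\chi)$.

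\emph{Conclusion and main difficulty.} Then $\psi := \chi - \chi_3$ lies in $\ker r$ and takes values in $\Lambda = \chi_3(H_1(S,\Z))$, so \cref{lemma:point_pushing} shows $\chi = \chi_3 + \psi$ lies in the $\Mod(S,Z)$-orbit of $\chi_3$; since $\chi_3 \in H(\mathcal C)$ and $H(\mathcal C)$ is $\Mod(S,Z)$-invariant, $\chi \in H(\mathcal C)$. I expect the main obstacle to be the transport step: one must push the absolute periods from $r(\chi_2)$ to $r(\chi)$ \emph{without disturbing} the branch-point positions arranged just before, which is exactly why $\tilde\phi$ must be chosen to fix $Z$ pointwise — a consequence of the Birman exact sequence. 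A secondary point requiring care is the combinatorial bookkeeping that lets one assume the fibre partitions of $\chi$ and of the given surface agree, which uses only $\Psi(\chi) = P$ together with the freedom to permute the marked points.
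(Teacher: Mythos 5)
Your proof is correct and follows essentially the same route as the paper's: normalise by $\mathrm{GL}_2^+(\R)$, use \cref{lemma:wiggle_branched_points} to place the branch points so that the relative periods agree with $\chi$ modulo $\Lambda$, then combine \cref{prop:MCG_covers} with \cref{lemma:point_pushing} and the $\Mod(S,Z)$-invariance and openness of $H(\mathcal C)$. The additional care you take in lifting the absolute mapping class to an element fixing $Z$ pointwise (so the congruences on the $\delta_{z_i}$ are not disturbed) is precisely the detail the paper's terser wording leaves implicit, and you handle it correctly.
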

\begin{proof}
Let us fix such a $\chi\in H^1(S, Z, \C)$. Acting by an element of $\mathrm{GL}_2^+(\R)$, we can assume that $\chi(H_1(S, \Z))$ spans $\Lambda = \Z + i\Z$.
By \cref{lemma:wiggle_branched_points}, there exists an abelian differential $(X, \omega)\in \mathcal C$ of volume $d$ such that $\Psi(X, \omega) = P$ and with periods $\chi_0$  satisfying $\chi_0(H_1(S, \Z)) = \Lambda$, for all $\gamma\in H_1(S, Z, \C)$, there exists $\lambda\in \Lambda$ so that $\chi(\gamma) = \chi_0(\gamma) + \lambda$.
It follows from \cref{prop:MCG_covers} and \cref{lemma:point_pushing} that $\chi$ is in the $\Mod(S, Z)$-orbit of $\chi_0$. Hence $\chi$ belongs to  $H(\mathcal C)$.
\end{proof}

This analysis of the mapping class group dynamics reduces the proof of \cref{main_theorem} to the construction of a small number of translation surfaces.
The remainder of this article is devoted to the construction of such surfaces, in each connected component of the strata. This will give a case by case proof of \cref{main_theorem} with \cref{prop:existence}. 
We will also provide a different proof of \cref{main_theorem} for the genus two case.

\section{Hyperelliptic components}\label{sec:hyperelliptic}

In this section we prove \cref{main_theorem} for the hyperelliptic strata of the moduli spaces of abelian differentials. Let us recall from \cite{KZ03} that the spaces $\mathcal H(2g-2)$ and $\mathcal H(g-1, g-1)$ both have a connected component called hyperelliptic, such that all their abelian forms are double covers of a quadratic differential on the sphere.

\subsection{Single zero}\label{SingleZero}
Observe that the case of the stratra $\mathcal H^\mathrm{hyp}(2g-2)$ follows from the main result of \cite{BJJP22}, since in these minimal strata the absolute periods coincide with the relative ones. We still provide an explicit construction for completeness.
Let us recall a criterion to decide when an abelian differential of $\mathcal H(2g-2)$ belongs to the hyperelliptic component $\mathcal H^{\text{hyp}}(2g-2)$.
\begin{fact}
An abelian differential $(X,\omega)\in \mathcal H(2g-2)$ belongs to $\mathcal H^{\text{hyp}}(2g-2)$ if and only if there exists $\varphi\colon X\to X$ a holomorphic involution that preserves $\omega$ and has $2g+2$ fixed points.
\end{fact}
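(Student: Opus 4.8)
\emph{Strategy.} I would prove both implications directly from the description of $\mathcal H^{\mathrm{hyp}}(2g-2)$ recalled above: its forms are precisely the double covers $\pi\colon X\to\CP$ of quadratic differentials $q$ on the sphere, with $\omega^2=\pi^*q$; equivalently, by \cite{KZ03}, the pairs $(X,\omega)$ with $X$ hyperelliptic and $\omega$ anti-invariant under the hyperelliptic involution constitute this whole connected component. A preliminary remark is that ``$\varphi$ preserves $\omega$'' must mean $\varphi^*\omega=-\omega$: near any fixed point a non-trivial holomorphic involution is conjugate to $z\mapsto -z$, and a one-form left invariant by it vanishes there, so with $2g+2>1$ fixed points but only one zero of $\omega$ no involution can satisfy $\varphi^*\omega=\omega$; the sign $-1$ is the only possibility and is the standard convention for the hyperelliptic involution of a translation surface. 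Thus the Fact asserts that $(X,\omega)\in\mathcal H(2g-2)$ admits such an involution with $2g+2$ fixed points if and only if it is hyperelliptic with anti-invariant form.

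\emph{Necessity.} If $(X,\omega)\in\mathcal H^{\mathrm{hyp}}(2g-2)$, I would take $\varphi$ to be the deck involution of the double cover $\pi\colon X\to\CP$ with $\omega^2=\pi^*q$. It is holomorphic, and its fixed points are exactly the ramification points of $\pi$, one over each branch point; Riemann--Hurwitz for the degree-two map, $2-2g=2\cdot 2-\#\{\text{branch points}\}$, gives $2g+2$ of them. Finally $\omega$ cannot be $\varphi$-invariant, for otherwise it would descend to a holomorphic one-form on $\CP$, which has none; since $\varphi^*$ squares to the identity on the line $\C\omega$, this forces $\varphi^*\omega=-\omega$.

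\emph{Sufficiency.} Conversely, given a holomorphic involution $\varphi$ with $\varphi^*\omega=-\omega$ and exactly $2g+2$ fixed points, set $Y=X/\varphi$ with quotient map $\pi\colon X\to Y$, a degree-two cover branched over the $2g+2$ points $\pi(\mathrm{Fix}\,\varphi)$. Riemann--Hurwitz, $2-2g=2\chi(Y)-(2g+2)$, forces $\chi(Y)=2$, so $Y=\CP$ and $X$ is hyperelliptic with $\varphi$ an involution of $\CP$-quotient under which $\omega$ is anti-invariant. Then $\omega^2$ descends to a quadratic differential $q$ on $\CP$ with $\omega^2=\pi^*q$, exhibiting $(X,\omega)$ as a double cover of a sphere quadratic differential; comparing local orders at the ramification points (order $2g-2$ at the single zero, order $0$ at the remaining $2g+1$) identifies $q$ as lying in $\mathcal Q(2g-3,-1^{2g+1})$ and hence places $(X,\omega)$ in the connected component $\mathcal H^{\mathrm{hyp}}(2g-2)$. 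The case $g=1$ needs nothing, $\mathcal H(0)$ being connected.

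\emph{Main obstacle.} There is no real computation here; the only substantive input is the theorem of Kontsevich--Zorich \cite{KZ03} that the locus just described is an entire connected component of $\mathcal H(2g-2)$ — so that ``double cover of a sphere quadratic differential'' genuinely detects $\mathcal H^{\mathrm{hyp}}$ — which I would simply cite. Everything else is Riemann--Hurwitz together with the local normal form of a holomorphic involution. The one point to get right is the meaning of ``preserves $\omega$''; once the sign is fixed one even sees that the ``$2g+2$ fixed points'' hypothesis is automatic, since for any involution with $\varphi^*\omega=-\omega$ the descended differential $q$ has $\deg q=-4$ on $\CP$, its odd-order points are exactly the images of the fixed points, and the orders forced by the single zero of $\omega$ make them number $2g+2$.
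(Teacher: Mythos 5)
Your argument is correct, and it is worth noting that the paper never proves this Fact: it is recalled from Kontsevich--Zorich \cite{KZ03}, whose description of $\mathcal H^{\mathrm{hyp}}(2g-2)$ as the component formed by the canonical double covers of quadratic differentials in $\mathcal Q(2g-3,-1^{2g+1})$ on $\mathbb{CP}^1$ is exactly the input you use. Your two directions --- the deck involution plus Riemann--Hurwitz for necessity, and for sufficiency the fixed-point count forcing the quotient to be a sphere, followed by descent of $\omega^2$ to a quadratic differential with a zero of order $2g-3$ and $2g+1$ simple poles --- constitute the standard argument, and your preliminary discussion of the sign (that ``preserves $\omega$'' can only mean $\varphi^*\omega=-\omega$ here) agrees with the convention the paper uses later in its sketch of \cref{prop:single_cylinder_Zorich}. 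Two small points. First, in the necessity step, the fact that $\varphi^*$ preserves the line $\C\omega$ should be justified before invoking that it squares to the identity there; this is immediate from $(\varphi^*\omega)^2=\varphi^*\pi^*q=\pi^*q=\omega^2$, which gives $\varphi^*\omega=\pm\omega$ directly. Second, your closing remark that the ``$2g+2$ fixed points'' hypothesis is automatic is not justified as written: for an arbitrary involution with $\varphi^*\omega=-\omega$ the quotient could a priori have genus $h\geqslant 1$, in which case the descended quadratic differential lives on that quotient, has degree $4h-4\neq-4$, and the fixed-point count is $2g+2-4h$ rather than $2g+2$; ruling out $h\geqslant 1$ is precisely what the hypothesis on the number of fixed points accomplishes in your sufficiency step. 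Since your proof of the Fact itself never uses this remark, the proof stands; just delete or weaken that aside.
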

We now wish to construct a translation surface in $\mathcal H^\mathrm{hyp}(2g-2)$ whose periods span $\Lambda = \mathbb Z + i\mathbb Z$ of volume $d$ satisfying $d\geqslant 2g-1$.
We start by considering a rectangle with a horizontal side of length $d$ and a vertical side of length $1$. We glue its opposite sides to get a torus.
Now slit the torus along $2g-2$ horizontal segments of unit length, so that the right endpoint of the $i$-th coincides with the left endpoint of the slit number $i+1$, for $1\leqslant i < 2g-2$. 
Finally, we glue the bottom boundary of the slit number $i$ to the top boundary of the slit $2g-1-i$, see \cref{fig:hypersingle} for an example with $g = 3$.
Observe that the condition on $d$ ensures that we have enough space for our construction.
\begin{figure}[h]
\centering{
%\resizebox{100mm}{!}
{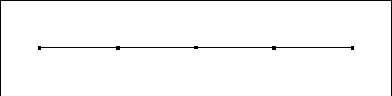}
\caption{Surface in $\mathcal H^{\text{hyp}}(4)$ with $d = 5$.}
\label{fig:hypersingle}
}
\end{figure}

The resulting surface $(X, \omega)$ has volume $d$. We easily check that it belongs to $\mathcal H(2g-2)$ and that its periods span $\mathbb Z + i\mathbb Z$. Moreover the involution $\varphi$ defined by a rotation of order two around the midpoint of union of the slit segments is such that $X/\varphi$ is homeomorphic to a sphere: it has $2g+2$ fixed points.

\begin{convention}
Most of our constructions will rely on same strategy: slitting and gluing back boundaries of a torus obtained by gluing opposite sides of a rectangle with horizontal side of length $d$ and unit length vertical side. Let us call such a torus a standard torus of length $d$.
\end{convention}

\subsection{Double zeroes}
We now turn to the component $\mathcal H^\mathrm{hyp}(g-1, g-1)$.
We want to exhibit a surface with lattice of absolute periods $\Lambda = \mathbb Z + i\mathbb Z$ and volume $d$ in $\mathcal H^\mathrm{hyp}(g-1, g-1)$ when the conditions of \cref{main_theorem} are satisfied.
The second condition of the statement of \cref{main_theorem} depends on whether the span of the absolute periods coincide with the span of the relative ones.
In other words, it depends on the number of points the associated cover is branched on.
\subsubsection{Relative periods different from absolute ones}\label{subsubsec:diff_hyp}
Let us first consider the case of where $P$ is of cardinality two: we want to construct a surface such that there exists a path joining its two singularities whose period is not in $\Lambda$.
The condition of \cref{main_theorem} is then $d\geqslant g$.
\begin{fact}
An abelian differential $(X,\omega)\in \mathcal H(g-1, g-1)$ belongs to $\mathcal H^{\text{hyp}}(g-1, g-1)$ if and only if there exists $\varphi\colon X\to X$ a holomorphic involution that preserves $\omega$, has $2g+2$ fixed points and interchanges the zeroes of $\omega$.
\end{fact}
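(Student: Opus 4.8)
The statement is the Kontsevich--Zorich description of the hyperelliptic component, and my plan is to identify, as a subset of the stratum, $\mathcal H^{\mathrm{hyp}}(g-1,g-1)$ with the locus of differentials carrying such an involution. A preliminary remark on the hypothesis: the condition that $\varphi$ preserves $\omega$ must be read as $\varphi^*\omega=-\omega$, that is, $\varphi$ acts by $z\mapsto -z$ in translation charts; the alternative $\varphi^*\omega=\omega$ is vacuous, since an involution with $2g+2$ fixed points has a genus-$0$ quotient by Riemann--Hurwitz, and a genus-$0$ surface carries no nonzero holomorphic one-form. Recall from \cite{KZ03} that $\mathcal H^{\mathrm{hyp}}(g-1,g-1)$ is by definition the connected component containing the differentials produced as follows: one takes a meromorphic quadratic differential $q$ on $\mathbb P^1$ with a single zero, necessarily of order $2g-2$, and $2g+2$ simple poles; one forms the double cover $\pi\colon X\to\mathbb P^1$ branched over the poles, on which $\sqrt q$ is single valued; and one sets $\omega=\sqrt q$, a holomorphic one-form whose two zeros, of order $g-1$, lie over the zero of $q$. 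Writing $\mathcal Q$ for this stratum of quadratic differentials and $\mathcal O(\mathcal Q)\subset\mathcal H(g-1,g-1)$ for the image, I would establish (i) that $\mathcal O(\mathcal Q)$ is exactly the set of $(X,\omega)$ admitting a $\varphi$ as in the statement, and (ii) that $\mathcal O(\mathcal Q)$ is a whole connected component, hence equals $\mathcal H^{\mathrm{hyp}}(g-1,g-1)$; together these give the claimed equivalence.

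For (i): if $(X,\omega)=\mathcal O(q)$, the deck involution $\varphi$ of $\pi$ satisfies $\varphi^*(\omega^2)=\varphi^*\pi^*q=\pi^*q=\omega^2$, hence $\varphi^*\omega=-\omega$; its fixed points are the $2g+2$ branch points; and, the zero of $q$ not being a branch point, its fibre is the pair of zeros of $\omega$, which $\varphi$ swaps. Conversely, given such a $\varphi$, I would put $Y=X/\varphi$, use Riemann--Hurwitz in the form $2g-2=2(2g_Y-2)+(2g+2)$ to get $g_Y=0$, so $Y\cong\mathbb P^1$, and observe that $\varphi^*(\omega^2)=\omega^2$ makes $\omega^2$ descend to a quadratic differential $q$ on $\mathbb P^1$. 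Inspecting local models (at a branch point $\omega$ is nonzero, so $q$ has a simple pole there; over the two swapped zeros $q$ has a zero of order $2g-2$), and using that the divisor of $q$ on $\mathbb P^1$ has degree $-4$, one sees there is no room for further zeros or poles; thus $q\in\mathcal Q$ and $(X,\omega)=\mathcal O(q)$.

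For (ii) I would check that $\mathcal O(\mathcal Q)$ is open, closed and connected in $\mathcal H(g-1,g-1)$. Connectedness: modulo $\mathrm{PGL}_2(\C)$ and rescaling, $\mathcal Q$ is parametrised by the positions of the zero and of the $2g+2$ poles of $q$, which form a connected configuration space, so $\mathcal Q$ and its continuous image $\mathcal O(\mathcal Q)$ are connected. Openness: I would show $\mathcal O$ is, in period coordinates, locally the identity. Both $\mathcal Q$ near $q$ and $\mathcal H(g-1,g-1)$ near $\mathcal O(q)$ are locally modelled on $H^1(X,Z,\C)$, with $Z$ the zero set of $\omega$, because $\varphi$ acts as $-\Id$ on all of $H^1(X,Z,\C)$: being the hyperelliptic involution of $X$ it acts as $-\Id$ on $H^1(X,\C)$, and it reverses the class of an arc joining the two zeros, so it acts as $-1$ on the single extra relative dimension, and as $\varphi$ is an involution this forces $\varphi^*=-\Id$ on the whole of $H^1(X,Z,\C)$ --- whence the $\varphi$-anti-invariant subspace, where the periods of $\sqrt q$ live, is everything. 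Together with $\dim_\C\mathcal Q=2g+1=\dim_\C\mathcal H(g-1,g-1)$, this makes $\mathcal O$ a local homeomorphism, so $\mathcal O(\mathcal Q)$ is open. Closedness: membership in $\mathcal O(\mathcal Q)$ is the condition that $X$ be hyperelliptic with its hyperelliptic involution exchanging the two zeros of $\omega$, and this passes to limits taken inside the stratum, since the hyperelliptic locus is closed, the two zeros stay distinct, and the canonical hyperelliptic involution varies continuously. Being open, closed and connected, $\mathcal O(\mathcal Q)$ is a connected component of the stratum, which by construction is the hyperelliptic one.

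The only step that is more than a routine verification is the openness in (ii), and in particular the cohomological fact that $\varphi$ acts as $-\Id$ on the full relative cohomology $H^1(X,Z,\C)$: this relies on $\#Z=2$, so that the relative part adds exactly one dimension to $H^1(X,\C)$ and that dimension is reversed by the swap of the two zeros, and, coupled with the dimension count, it is precisely what upgrades $\mathcal O$ from an immersion to a local homeomorphism. The closedness argument also deserves a careful treatment of the limiting involution, though this is standard once one knows the quotient genus cannot drop below $0$.
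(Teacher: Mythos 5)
Your proposal is correct, but there is nothing in the paper to compare it with line by line: the paper states this Fact without proof, recalling it from \cite{KZ03}, where the hyperelliptic component of $\mathcal H(g-1,g-1)$ is by definition the component consisting of orientation double covers of quadratic differentials on $\mathbb{CP}^1$ with singularity data $(2g-2,-1^{2g+2})$. What you have written is a faithful reconstruction of the standard Kontsevich--Zorich argument: the local computation identifying the double-cover locus with the locus of differentials admitting an involution with $2g+2$ fixed points, $\varphi^*\omega=-\omega$, and swapping the zeroes; the Riemann--Hurwitz count $g_Y=0$ in the converse; and the open-closed-connected argument, where the key point is indeed that $\varphi^*=-\mathrm{Id}$ on all of $H^1(X,Z,\C)$ (your eigenvalue argument using that an involution acting by $-1$ on both the kernel of $r$ and on $H^1(X,\C)$ must be $-\mathrm{Id}$ is fine), so that the anti-invariant periods fill the full $(2g+1)$-dimensional local model and the image is open. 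Two small remarks: your reading of ``preserves $\omega$'' as $\varphi^*\omega=-\omega$ is consistent with the paper's own usage, since in the proof of \cref{prop:single_cylinder_Zorich} the author writes ``We must have $\varphi^*(\omega)=-\omega$''; and the closedness step, while correct, is the place to be most careful --- one should say explicitly that hyperellipticity is closed in $\mathcal M_g$, that the hyperelliptic involution automatically satisfies $\varphi^*\omega=-\omega$, and that since the two zeroes stay distinct inside the stratum, the swapping condition survives passage to the limit. With those points spelled out, your argument gives a complete, self-contained proof of a statement the paper simply imports.
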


%Let us start as in \cref{SingleZero} by considering a rectangle with horizontal side of length $d$ and vertical side of unit length, whose sides are glued by translations.
Consider a standard torus of size $d$.
Let us slit it along $g-1$  horizontal line segments of length $\frac{1}{2}$ meeting at their endpoints to form a segment line of length $\frac{g-1}{2}$. We then slit along $g-1$ other horizontal line segments following the same pattern, on the right at distance $\frac{1}{2}$ of the previous one. 
Index the slits from the left to the right. We then glue the bottom boundary of the slit number $i$ with the top boundary of the slit $2g-1-i$. See \cref{fig:hyp22dif} for an example with $g = d = 4$.
\begin{figure}[h]
\centering{
%\resizebox{75mm}{!}
{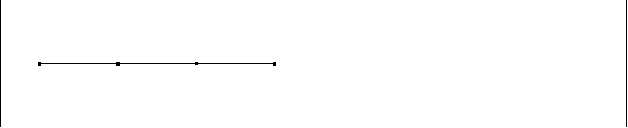}
\caption{Surface in $\mathcal H^{\text{hyp}}(3, 3)$ with $d = 4$.}
\label{fig:hyp22dif}
}
\end{figure}

The resulting surface $(X, \omega)$ is in $\mathcal H(g-1, g-1)$: the two zeroes of $\omega$ are at the endpoints of each slit. Moreover the absolute periods span $\mathbb Z + i\mathbb Z$, while the relative periods span $\frac{1}{2}\mathbb Z + i\mathbb Z$. Finally, its volume is $d$. Observe that the involution $\varphi$ defined by a rotation of order two centered at the middle of the two slitting segment lines has $2g+2$ fixed points and interchanges the zeroes of $\omega$. Therefore $(X, \omega)$ belongs to $\mathcal H^\mathrm{hyp}(g-1, g-1)$.

\subsubsection{Relative periods equal absolute ones}\label{subsubsec:eq_hyp}
We now turn to the case where the relative and absolute periods coincide.
The condition of \cref{main_theorem} in this case is that $d\geqslant 2g$.
We want to construct a torus cover branched over a single point, which is also called a square-tiled surface.
Let $r$ and $u$ be the permutations of $\mathfrak S_d$ defined as follows:
\begin{align*}
r &=  (1\ 2\ \ldots\ 2g-1),\\
u(i) &=  \left\{
\begin{array}{ll}
	2g-1-i & \mbox{if } 1\leqslant i \leqslant 2g-2\\
	i+1 & \mbox{if } 2g-1\leqslant i < d\\
    2g-1 & \mbox{if } i = d.
\end{array}
\right.
\end{align*}

Consider $d$ unit squares in the plane labelled with integers $1,\ldots, d$. We glue the right side of the square $i$ with the left side of the $r(i)$-th and its upper side with the lower side of the square number $u(i)$.
This way of defining square-tiled surfaces with pairs of permutations is classical, see for example \cite[Section 1]{M22}.
\cref{fig:hyp22eq} gives an example with $g = 4$ and $d = 8$. 
\begin{figure}[h]
\centering{
%\resizebox{75mm}{!}
{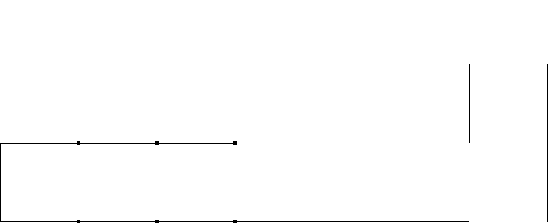}
\caption{Square-tiled surface in $\mathcal H^{\text{hyp}}(3, 3)$ with $d = 8$.}
\label{fig:hyp22eq}
}
\end{figure}

The resulting translation surface $(X, \omega)$ is of volume $d$, and both its absolute and relative periods span $\Z + i\Z$.
The commutator $\sigma = [r, u]$ is easily checked to be \[\sigma = \left (1~3~5~\ldots~2g-3~d\right )\left (2~5~2g-2~~2g-1\right).\]
It follows that $(X, \omega)\in \mathcal H(g-1, g-1)$, see \cite[Section 1.2]{M22}.
We may cut the squares $2g-1, \ldots, d$ in halves along vertical segment lines, and glue the resulting vertical cylinder back to the left of the horizontal cylinder formed by $1,\ldots, d$, as in \cref{fig:hyp22weier}.
This new presentation of $(X, \omega)$ allows us to define an involution $\varphi\colon X\to X$ preserving $\omega$ more easily: the rotation of order two along the centre of the horizontal cylinder.
We check that $\varphi$ has $2g+2$ fixed points, see \cref{fig:hyp22weier} for an example with $g = 4$.
\begin{figure}[h]
\centering{
%\resizebox{75mm}{!}
{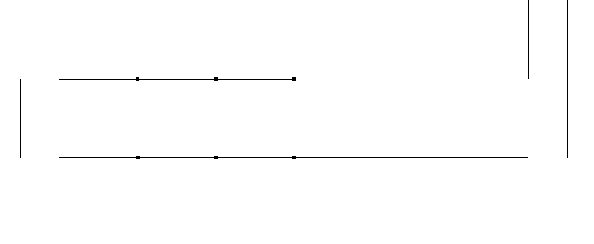}
\caption{Fixed points of the hyperelliptic involution.}
\label{fig:hyp22weier}
}
\end{figure}
Moreover, $\varphi$ interchanges the two zeroes of $\omega$, thus the abelian differential $(X, \omega)$ belongs to $\mathcal H^\mathrm{hyp}(g-1, g-1)$.
%\[
%\sigma(i) =  \left\{
%\begin{array}{ll}
%	i+2 &   \mbox{if } 1\leqslant i \leqslant 2g-4\\
%	d &   \mbox{if }  i = 2g-3\\
%
%    2g-1 &   \mbox{if } i = 2g-2\\	
%    2 & \mbox{if } i = 2g-1\\
%    i   &   \mbox{if } 2g\leqslant i \leqslant d-1.\\
%    	1 &   \mbox{if } i = d\\
%\end{array}
%\right.
%\]
\subsection{Hyperelliptic cylinders}
Before constructing surfaces in the other components, let us provide a criterion to ensure that certain abelian differentials are not in the hyperelliptic components, following Zorich \cite[Proposition 5]{Z08}.
Suppose that $S$ has a single horizontal cylinder: the horizontal foliation on $S$ presents it as a cylinder $\mathcal C$ with its sides identified. Then the two boundaries of $\mathcal C$ decompose into saddle connections $T_1, \ldots, T_k$ and $B_1, \ldots, B_k$ that are naturally cyclically ordered.
The examples of single cylinders surfaces in the hyperelliptic components we provided are actually the only ones.
\begin{prop}[Zorich]\label{prop:single_cylinder_Zorich}
Let $S$ be a translation surface in a hyperelliptic component with a single cylinder $\mathcal C$ whose boundaries are decomposed as $T_1, \ldots, T_k$ and $B_1, \ldots, B_k$ as before. After cyclic permutation of the $T_i$, each saddle connection $B_i$ is glued to the saddle connection $T_{k - i + 1}$ in $S$.
\end{prop}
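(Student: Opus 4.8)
Let $\pi\in\mathfrak S_k$ be the permutation for which $B_i$ is glued to $T_{\pi(i)}$ in $S$. The plan is to exploit the hyperelliptic involution $\varphi$ of $S$, which acts as $z\mapsto -z+c$ in the flat coordinates of $(X,\omega)$ (as do the rotations by $\pi$ used in the constructions above). Since $\varphi$ preserves the horizontal foliation, it permutes the maximal horizontal cylinders of $S$, hence fixes $\mathcal C$ because there is only one. Writing $\mathcal C=\R/L\Z\times[0,h]$ with circumference $L$ and height $h$, an isometry of the form $z\mapsto -z+c$ preserves $\mathcal C$ only when $\Im(c)=h$, and then it interchanges the two boundary circles, acting on horizontal positions by one and the same reflection $r\colon x\mapsto c_0-x$ of $\R/L\Z$.

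From this I would read off a combinatorial constraint. As $\varphi$ carries zeroes of $\omega$ to zeroes, $r$ carries the zeroes on the bottom boundary bijectively onto those on the top; being orientation-reversing on the circle, it carries the cyclically ordered saddle connections $B_1,\dots,B_k$ onto the top ones in the reverse cyclic order, say $r(B_i)=T_{\alpha(i)}$ with $\alpha(i)\equiv c'-i\pmod k$ for some constant $c'$. In particular $|B_i|=|T_{\alpha(i)}|$, while the gluing forming $S$ gives $|B_i|=|T_{\pi(i)}|$. Setting $\ell_i:=|B_i|$, the permutation $\alpha^{-1}\pi$ of the labels therefore fixes the vector $(\ell_1,\dots,\ell_k)$. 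If the $\ell_i$ are pairwise distinct this forces $\pi=\alpha$, hence $\pi(i)\equiv c'-i\pmod k$, and relabelling the $T_j$ by the cyclic shift $j\mapsto j+1-c'$ turns the gluing into $B_i\leftrightarrow T_{k+1-i}$, as claimed.

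To drop the genericity hypothesis I would use connectedness. For fixed $\pi$, the single-cylinder surfaces obtained by this slitting-and-gluing construction form a family $\mathcal F_\pi$ parametrised by the lengths $(\ell_1,\dots,\ell_k)\in\R_{>0}^k$, the height, and the twist; it is connected and every member has exactly one horizontal cylinder. Furthermore $\mathcal F_\pi$ lies in a single stratum, because the identifications among the endpoints of the $B_i$ and $T_j$ that produce the zeroes of $\omega$, and hence the orders of those zeroes, depend only on $\pi$ and not on the metric parameters. Since $S\in\mathcal F_\pi$ and $S$ lies in the hyperelliptic component, which is a connected component of that stratum and hence closed and open in it, the whole connected set $\mathcal F_\pi$ lies in the hyperelliptic component. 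Applying the previous two paragraphs to a member of $\mathcal F_\pi$ with pairwise distinct saddle connection lengths yields $\pi(i)\equiv c'-i\pmod k$ for our $\pi$, and the statement follows from the cyclic relabelling.

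I expect the genericity reduction to be the crux: the symmetry that $\varphi$ forces on its own — namely $\alpha\pi\alpha=\pi^{-1}$ for some order-reversing cyclic $\alpha$ — is strictly weaker than the conclusion (for $k=3$ the three-cycle $(1\,2\,3)$ satisfies it but is not cyclically conjugate to the order-reversal), and what excludes such $\pi$ is that they are never realised by hyperelliptic surfaces; the invariance of the length data under $\alpha^{-1}\pi$, evaluated at a generic point of $\mathcal F_\pi$, is precisely what encodes this. The two mildly delicate points to verify are the connectedness of $\mathcal F_\pi$ and that it remains within a single stratum.
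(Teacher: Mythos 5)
Your proof is correct and follows essentially the same route as the paper's: the paper also invokes the involution with $\varphi^*\omega=-\omega$, which must preserve the unique cylinder, exchange its two boundaries and reverse their cyclic order, and it reduces to the case of pairwise distinct saddle connection lengths by perturbing $S$ within its (open) connected component — precisely the step you spell out via the connected, stratum-preserving family $\mathcal F_\pi$. Your length-matching argument forcing $\pi=\alpha$ is just a rephrasing of the paper's observation that each saddle connection is sent to itself when the lengths are distinct.
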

We sketch here the short proof of Zorich.
\begin{proof}
We may perturb $S = (X, \omega)$ so that the $T_i$ have pairwise different lengths.
Let $\varphi\colon S\to S$ be an involution that preserves $\omega$ such that $S/\varphi$ is a sphere.
We must have $\varphi^*(\omega) = -\omega$, hence $\varphi$ exchanges the $T_i$ and $B_j$, and reverses their cyclic order. Moreover each $T_i$ is sent to itself in $S$ since they have different lengths.
\end{proof}
As a consequence of \cref{prop:single_cylinder_Zorich}, we observe that it was necessary to construct a surface that does not have a single cylinder in \cref{subsubsec:eq_hyp}.
\begin{cor}
A square-tiled surface with $2g$ tiles in $\mathcal H^\mathrm{hyp}(g-1, g-1)$ whose absolute periods span $\Z + i\Z$ cannot have a single cylinder.
\end{cor}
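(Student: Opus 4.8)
The plan is to assume that such a surface $(X,\omega)$ exists and to contradict the hypothesis that its absolute periods span $\mathbb Z+i\mathbb Z$. Since the horizontal foliation has a single cylinder $\mathcal C$, all $2g$ unit squares lie in $\mathcal C$, so $\mathcal C$ has height $1$ and circumference $2g$, and its two boundary circles decompose into horizontal saddle connections $B_1,\dots,B_N$ and $T_1,\dots,T_N$. A quick Euler characteristic count for the resulting polygon ($F=1$, $E=N+1$ edges after the gluings, $V=2$ zeros, $\chi=2-2g$) forces $N=2g$; as the lengths $|B_i|$ are positive integers summing to $2g$, each saddle connection has length exactly $1$, and likewise for the $T_i$.

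Next I would invoke \cref{prop:single_cylinder_Zorich}: after a cyclic relabelling of the $T_i$ we may assume $B_i$ is glued to $T_{2g+1-i}$ in $X$ for every $i$. The first real step is to extract from this gluing the combinatorics of the \emph{spine} $\Gamma:=T_1\cup\dots\cup T_{2g}$, a graph with $2g$ unit edges whose vertices are the zeros of $\omega$: chasing the endpoints of the $B_i$ and $T_i$ through the gluing $B_i\sim T_{2g+1-i}$ and through the identification of the two vertical sides of $\mathcal C$, one sees that the vertices of $\Gamma$ alternate between the two zeros along $\partial\mathcal C$, so $\Gamma$ is bipartite on its two vertices and has no loop edge. (This computation also recovers that $(X,\omega)\in\mathcal H(g-1,g-1)$ with the hyperelliptic involution swapping the two zeros, and it shows that the cyclic ambiguity in \cref{prop:single_cylinder_Zorich} is harmless.)

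Finally I would use bipartiteness to pin down the periods. The core curve of $\mathcal C$ is homologous in $X$ to $T_1+\dots+T_{2g}$, so $\Gamma$ carries all of homology except one transverse class: $H_1(X,\mathbb Z)=H_1(\Gamma,\mathbb Z)\oplus\mathbb Z\gamma_v$, where $\gamma_v$ crosses $\mathcal C$ once from bottom to top and has imaginary period $1$ (so $[\Im\omega]$ vanishes on $H_1(\Gamma,\mathbb Z)$ and equals $1$ on $\gamma_v$). A homology class thus has real period only if its $\gamma_v$-coefficient vanishes, i.e. only if it lies in $H_1(\Gamma,\mathbb Z)$; and the real period of a cycle in $\Gamma$ is a sum of an even number of $\pm1$'s — one per traversed edge, an even count because $\Gamma$ is bipartite — hence even. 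So every real absolute period of $(X,\omega)$ lies in $2\mathbb Z$, contradicting the fact that the absolute periods span $\mathbb Z+i\mathbb Z$, whose real part is $\mathbb Z\not\subset 2\mathbb Z$. The main obstacle is purely bookkeeping: carrying out the endpoint chase in the second paragraph to establish bipartiteness (and the absence of loop saddle connections) uniformly in the cyclic shift left undetermined by Zorich's statement; once that is in place the rest is a short count.
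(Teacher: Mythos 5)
Your argument is correct, and its skeleton is the same as the paper's: both proofs feed the single-cylinder hypothesis into \cref{prop:single_cylinder_Zorich}, use the count of $2g$ tiles to force $2g$ saddle connections of unit length on each boundary, and then derive a contradiction with the period lattice. Where you diverge is the finish: the paper pins the surface down as lying in the $\mathrm{SL}_2(\Z)$-orbit of the explicit origami $r=(1\,2\,\cdots\,2g)$, $u=(1\;2g)(2\;2g-1)\cdots$, and simply reads off that its absolute periods span $2\Z+i\Z$, whereas you avoid naming a model and instead prove evenness of all horizontal absolute periods homologically, via bipartiteness of the spine graph $\Gamma$ and the splitting of $H_1(X,\Z)$ into $H_1(\Gamma,\Z)$ plus a crossing class. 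Your route buys a cleaner treatment of the cyclic-shift/twist ambiguity left open by Zorich's statement (the paper absorbs it into the $\mathrm{SL}_2(\Z)$-orbit without comment), at the cost of the endpoint chase you flag; that chase does go through uniformly in the twist, since the vertex classes are exactly the parity classes of the integer positions along the boundary. One small patch: you assert at the outset that the cylinder has height $1$ and circumference $2g$, but this needs a word --- with height $h$ the circumference is $2g/h$, and your own Euler count ($N=2g$ saddle connections of positive integer length summing to the circumference) forces $h=1$; alternatively, $h\geqslant 2$ would put all imaginary parts of periods in $h\Z$, already contradicting the hypothesis. With that sentence added the proof is complete.
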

\begin{proof}
Let us suppose that each boundary of the cylinder is decomposed into $k$ saddle connections.
It follows from \cref{prop:single_cylinder_Zorich} that the square-tiled surface $(X, \omega)$ is in $\mathcal H(k-1)$ if $k$ is odd and in $\mathcal H(\frac{k}{2}-1, \frac{k}{2}-1)$ if $k$ is even, see also \cref{fig:hypersingle} and \cref{fig:hyp22dif}.
Therefore we have $k = 2g$.
Hence $(X, \omega)$ is in the $\mathrm{SL}_2(\Z)$-orbit of the square-tiled surface defined by the permutations $r = \left ( 1~ 2~\ldots~2g\right )$ and $u = \left (1~2g\right )\left (2~2g-1\right)\ldots \left (g-1~g+1\right )$.
The two zeroes of $\omega$ are located respectively at the left and right corners of each square, and its absolute periods span $2\Z + i\Z$.
\end{proof}
\begin{rmk}
Note however that it is possible to construct a single cylinder square-tiled surface in  $\mathcal H^\mathrm{hyp}(g-1, g-1)$ with $d\geqslant2g+1$ tiles whose absolute periods span $\Z + i \Z$.
Indeed, one can then modify the construction of \cref{subsubsec:diff_hyp} with unit length slits and ensure that the two zeroes of the abelian differential are joined by a path of period an odd number.
The span of the absolute periods then coincides with the span of the relative ones.
\end{rmk}
This restriction on single cylinders surfaces in the hyperelliptic strata is reminiscent of the results of Jeffreys \cite{J21} who computed the minimal numbers of squares necessary to create square-tiled surfaces in each connected component with single horizontal and vertical cylinders.
\section{Odd components}\label{sec:odd}
\subsection{Arf invariant}
Let us suppose that $n_1\ldots, n_k$ are all even numbers such that $\sum_i n_i = 2g-2$.
We can associate to each connected component of $\mathcal H(n_1, \ldots, n_k)$ the parity of its spin structure. Let us recall a concrete way of defining it, following \cite[Section 9.4]{Z06}. Let $(X, \omega)\in \mathcal H(n_1, \ldots, n_k)$ and choose $(a_1, b_1, \ldots, a_g, b_g)$ a symplectic basis of $H_1(X, \Z)$.
For $\gamma\in H_1(X, \Z)$, pick a smooth $c:\mathbb S^1\to X$ representing $\gamma$ such that its derivative $c'$ never vanishes.
Define $\mathrm{Ind}(\gamma)$ to be the degree of the map $\mathbb S^1\to \mathbb S^1$ defined by $s\mapsto \frac{c'(s)}{|c'(s)|}$ in linear charts.
The quantity \[\mathrm{Arf}(X, \omega) = \sum_{i=1}^g \left (\mathrm{Ind}(a_i)+1\right )\left (\mathrm{Ind}(b_i)+1\right )\mod 2\]
does not depend on our choices, and is constant on each connected component.
This invariant, together with the hyperellipticity, completely determines the connected components of each stratum by the main result of \cite{KZ03}.
We now construct abelian differentials in the odd components of $\mathcal H(n_1, \ldots, n_k)$. We first present the construction and then show that it belongs to the right component.
\subsection{The minimal stratum}\label{subsec:minimal_odd}
Let us begin with the case $k = 1$, and suppose that $d\geqslant n_k+1$.
We slit a standard torus of size $d$ along unit length horizontal line segments following this pattern: we place two line segments having an extremity in common and then $n_k - 2$ others at their right, each separated from its previous slit by unit length.  The condition on $d$ ensures that our rectangle is long enough to slit along this pattern. We then glue the bottom of the slit number $i$, from left to right, with the top of the slit $i+1$ if $i < n$ and with the top of the first one if $i = n$.
See \cref{fig:oddsingle} for an example where $d= 7$ and $n_1 = 6$.
\begin{figure}[h]
\centering{
%\resizebox{100mm}{!}
{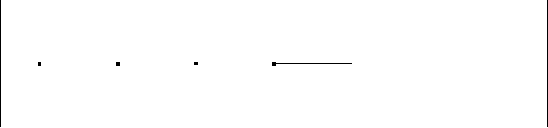}
\caption{Surface in $\mathcal H^\mathrm{odd}(6)$ with $d = 7$.}
\label{fig:oddsingle}
}
\end{figure}
\subsection{General construction}\label{subsec:general_construction}
Let us turn to the general construction for $k \geqslant 1$. 
Suppose that we have a partition $P = \{A_1, \ldots, A_l\}$ of $\{1, \ldots, k\}$ such that for every $1\leqslant i\leqslant l$, $\sum_{j\in A_i} \left (n_j + 1\right ) \leqslant d$.
We will slit a standard torus of length $d$ and glue back the boundaries with translations following several patterns.
We place a gluing pattern on $l$ different horizontal lines, one for each $A_i$.
Write $A_i = \{n_{j_1}, \ldots {n_{j_{k_i}}}\}$. 
On the line corresponding to $i$ we place side by side, from left to right, the patterns from \cref{subsec:minimal_odd} to create zeroes of order $n_{j_1}, n_{j_2},\ldots, n_{j_{k_i}}$.
Since $\sum_{j\in A_i} (n_j + 1)\leqslant d$, the rectangle is long enough to contain this concatenation.

\subsection{The resulting surface}\label{subsec:resulting_odd}
Let us denote by $(X, \omega)$ the abelian differential resulting from this construction.
It is of volume $d$, its absolute periods span $\Z + i\Z$ and $(X, \omega)$ belongs to $\mathcal H(n_1, \ldots, n_k)$.
Let us check that the Arf invariant of $(X, \omega)$ is odd.
Let $\gamma_1, \ldots \gamma_{g-1}\in H_1(S, \Z)$ be the simple closed curves going around each slit as the red curves in \cref{fig:oddsingle}.
There exist simple closed curves $\delta_1, \ldots, \delta_{g-1}$ in the interior of the rectangle such that $\delta_i$ intersects $\gamma_i$ once, and is disjoint from the $\gamma_j$ and the $\delta_j$, for $j\neq i$.
The sides $(a,b)$ of the standard torus, together with the elements $(\gamma_i, \delta_i)$, form a symplectic basis of $H_1(S, \Z)$. In this basis, one easily computes that $\mathrm{Arf}(X, \omega) = 1$ since $\mathrm{Ind}(\gamma_i) = 1$ for all $i$.
It remains to check that $(X, \omega)$ is not in the hyperelliptic component.
This can only potentially happen when $k \in \{1,2\}$.
One can reduce the length of the slits and align them to modify the surface inside its connected component and ensures that it has a single horizontal cylinder.
It thus follows from \cref{prop:single_cylinder_Zorich} that it does not belong to the hyperelliptic component, unless when $k=1$ and $g = 2$. 
But the hyperelliptic component of $\mathcal H(2)$ is the full stratum $\mathcal H(2)$ by \cite{KZ03}.
%\begin{lemma}
%Let $\gamma_1, \ldots, \gamma_l\in H_1(S, \Z)$ be such that
%\begin{enumerate}
%    \item for every $1\leqslant i < j \leqslant l$, $\iota(\gamma_i, \gamma_j) = 0$,
%    \item for every $1\leqslant i\leqslant l$, there exists $\alpha\in H_1(S, \Z)$ such that $\iota(\gamma_j, \alpha) = \left\{
%    \begin{array}{ll}
%	1 &   \mbox{if } i = j\\
%	0 &   \mbox{if }  i \neq j.\\
%\end{array}
%\right.$
%\end{enumerate}
%One can extend $(\gamma_1, \ldots, \gamma_l)$ into a symplectic basis of $H_1(S, \mathbb Z)$
%\end{lemma}

\section{Even component}\label{sec:even}
We now turn to the construction of surfaces in the even components of the strata $\mathcal H(n_1, \ldots, n_k)$, where each $n_i$ is even.
\subsection{The strata $\mathcal H(2, 2,\ldots, 2)$}
Let us first observe that the Arf invariants of the surfaces in $\mathcal H^{\mathrm{hyp}} (2, 2)$ we constructed in \cref{SingleZero} are even.
Indeed it is shown in \cite{KZ03} that the hyperelliptic component of $\mathcal H(2,2)$ coincides with the even one.
It can also be checked directly: it suffices to verify that there exists a surface in the hyperelliptic component of $\mathcal H(2,2)$ with even Arf invariant.
One can for example complete the loops of \cref{fig:H2222} to form a symplectic basis.
\begin{figure}[h]
\centering{
%\resizebox{100mm}{!}
{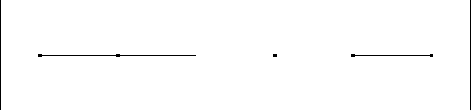}
\caption{Surface in $\mathcal H^\mathrm{hyp}(2, 2)$ with $d=6$.}
\label{fig:H2222}
}
\end{figure}
We also recall we need $d\geqslant 3$ to include this pattern in the standard torus.
Now suppose that we want to create an abelian differential with at least three zeroes, each of order two.
Let us assume that $P = \{A_1, \ldots, A_l\}$ is such that for every $1\leqslant i\leqslant l$, 
\[\sum_{j\in A_i}\left (n_j + 1\right )\leqslant d.\]
If every $A_i$ is a singleton, then we use the pattern of \cref{fig:H2222} on one horizontal line. 
On $l-2$ others we use the pattern from \cref{subsec:minimal_odd} to create a zero of order two.
If however one of the $A_i$, say $A_1$, is of cardinality at least two, then we place patterns on $l$ horizontal lines.
On the lines corresponding to $2\leqslant i\leqslant l$, we place side by side $\mathrm{Card}(A_i)$ patterns of \cref{subsec:minimal_odd} to create $\mathrm{card}(A_i)$ zeroes of order two. On the line corresponding to $A_i$ we place $\mathrm{Card}(A_1) - 2$ copies of the pattern of \cref{subsec:minimal_odd}, and the pattern of \cref{fig:H2222} with slits of unit length.
Observe that the length needed to include this line is \[3\left (\mathrm{Card(A_1) - 2}\right ) + 6 = \sum_{j\in A_1} \left (n_j + 1\right ).\]
We thus have constructed an abelian differential $(X, \omega)$ in $\mathcal H(2, 2, \ldots, 2)$ of volume $d$ with even Arf invariant, such that the partition $\Psi(X, \omega)$ corresponds to the one of $P$.
It remains to check that $(X, \omega)$ does not belong to the hyperelliptic component.
One can reduce the slits and align them to deform $(X, \omega)$ so that it has a single horizontal cylinder.
It then follows from \cref{prop:single_cylinder_Zorich} that $(X, \omega)$ is not in the hyperelliptic component.
\subsection{The minimal strata}\label{subsec:minimal_strata_even}
Let us now construct a surface in $\mathcal H^\mathrm{even}(2g-2)$ for $g\geqslant 4$.
Observe that the surface in $\mathcal H^\mathrm{hyp}(4)$ constructed in \cref{SingleZero} has even Arf invariant.
Indeed, the differentials $\mathcal H^\mathrm{hyp}(4)$ are the ones of $\mathcal H(4)$ with even spin invariant by \cite{KZ03}.
One could also consider a basis of its first homology group similar to \cref{fig:H2222} and check that its Arf invariant is even, see the four slits at the left in \cref{fig:even}. 
To construct a surface in $\mathcal H(2g-2)$ with even spin invariant, we use the following gluing pattern in the standard torus of length $d\geqslant 2g-1$: place the pattern of $\mathcal H(4)$ of \cref{fig:hypersingle}, and directly at its right endpoint, the one of $\mathcal H^\mathrm{odd}(2g-6)$ from \cref{subsec:minimal_odd}.
See \cref{fig:even} for an example with $g = 4$.  
\begin{figure}[h]
\centering{
%\resizebox{100mm}{!}
{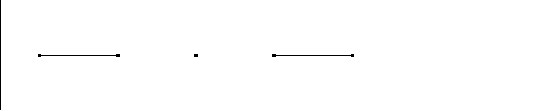}
\caption{Surface in $\mathcal H^\mathrm{even}(6)$ with $d = 7$.}
\label{fig:even}
}
\end{figure}
The resulting surface is in $\mathcal H(2g-2)$ and its periods span $\Z + i\Z$.
Moreover its Arf invariant is even, as one can check by completing the loops of \cref{fig:even} and \cref{fig:oddsingle} into a symplectic basis of the first homology group.
It follows from \cref{prop:single_cylinder_Zorich} that this surface does not belong to the hyperelliptic component, thus it is in $\mathcal H^\mathrm{even}(2g-2)$.

\subsection{General case}
We can now modify the construction of \cref{subsec:general_construction} to create surfaces in $\mathcal H(n_1, \ldots, n_k)$ with even Arf invariant, when $k\geqslant 2$ and one of the $n_i$ is at least four.
Indeed, it suffices to replace the part of the pattern creating one of the zero, say of order $n_1\geqslant 4$, by the pattern used in \cref{subsec:minimal_strata_even} to create a surface in $\mathcal H^\mathrm{even} (n_1)$.
The resulting surface is of volume $d$, its absolute periods span $\Z + i\Z$, and $\Psi(X, \omega)$ is given by the cardinalities of $A_1, \ldots A_l$.
It remains to check that it does not belong to the hyperelliptic component. 
This can only happen when $k = 2$.
But one can, as in \cref{subsec:resulting_odd}, reduce the slits and align them so that there is a single horizontal cylinder. 
It then follows from \cref{prop:single_cylinder_Zorich} that our surface does not belong to the hyperelliptic component.

\section{The other components}\label{sec:others}
In the previous sections, we have constructed tori covers in the hyperelliptic components of $\mathcal H(2g-2)$ and $\mathcal H(g-1, g-1)$. We also have formed surfaces in the odd and even components of $\mathcal H(n_1, \ldots, n_k)$ when all the $n_i$ are even.
In order to prove \cref{main_theorem} it remains to construct surfaces in the other connected components of strata. 
Namely, we need to form surfaces in the connected strata and in the non-hyperelliptic component of $\mathcal H(g-1, g-1)$, when $g$ is even.

\subsection{Constructing two zeroes of odd order}\label{subsec:two_odd_points}
%\subsubsection{Different orders}\label{subsubsec:different_orders}
%Let us first present a gluing pattern in the standard torus of length $d$ to create two zeroes of order  $n< m$ that are odd, such that a path joining them has period in the span of the absolute periods.
%Place $n+1$ unit length horizontal line segments on the same horizontal line, each separated by distance one from its closest neighbours.
%We glue the bottom of the slit $i$ with the top of the slit $i+1$ for $i \leqslant n$ and the bottom of the slit $n+1$ with the top of the first one. 
%This creates two zeroes, each of order $n$.
%Directly at the right end of this pattern, place the pattern used in \cref{subsec:minimal_odd} to create a zero of order $n-m$.
%We now have a zero of order $n$ and one of order $m$.
%We have room to make these slits and gluing as long as $d\geqslant n + m + 2$.
%See the first line of \cref{fig:HConnected} for an example with $n = 1$ and $m = 3$.
%\subsubsection{Same orders}
%While the construction of \cref{subsubsec:different_orders} extends to the case $n = m$, the absolute period span of this construction is strictly included $\Z + i\Z$ in this case.
%We thus provide another way of constructing two zeroes of order $n$, when $n$ is odd, such that the absolute periods span $\Z + i\Z$.
Let us first present a gluing pattern in the standard torus of length $d$ to create two zeroes of order  $n \leqslant m$ that are odd, such that a path joining them has period in the span of the absolute periods.
Place two horizontal line segments of unit length, separated by distance one.
We slit along these segments, and glue the top boundaries with the bottom ones, creating two branch points of order one.
Directly at the left end of this pattern, we place the pattern of \cref{subsec:minimal_odd} for $n-1$ if $n > 1$. Directly at the right end of it, we place the pattern of \cref{subsec:minimal_odd} for $m - 1$ if $m > 1$. 
This changes the order of the two branch points to $n$ and $m$, see the first line of \cref{fig:HConnected} for an example with $n = 1$ and $m = 3$.
We have room to make these slits and gluing as long as $d\geqslant n + m + 2$.
This construction can also be described as creating four branch points, whose orders are $n-1, 1, 1, m-1$ and then shrinking two saddle connections as in \cite{Z08}.

\begin{figure}[h]
\centering{
%\resizebox{100mm}{!}
{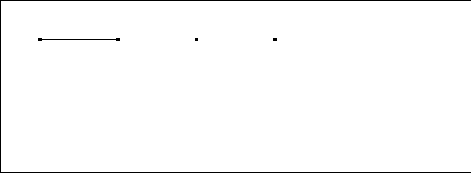}
\caption{Surface in $\mathcal H(1, 3, 3,5)$ with $d = 6$.}
\label{fig:HConnected}
}
\end{figure}

\subsection{General construction}
We now turn to the general case. Suppose that $d\geqslant 2$ and that $P = \{A_1, \ldots, A_l\}$ is a partition of $\{1, \ldots, k\}$ such that for all $1\leqslant i\leqslant l$ we have 
\[\sum_{j\in A_i} \left (n_j + 1\right ) \leqslant d.
\]
We are going to cut and glue the standard $d$ torus, following a pattern that has one line for each $A_i$, as in \cref{subsec:general_construction}.

\subsubsection{Even lines}\label{subsubsec:even_lines} If $A_i$ is such that $\sum_{j\in A_i} n_j$ is even, let us pair the $j$ in $A_i$ such that $n_j$ are odd. 
On a horizontal line, we place side by side different patterns presented before.
For each even $n_j$, we use the pattern from \cref{subsec:minimal_odd}. 
For each pair of odd $n_j$, we use the pattern explained in \cref{subsec:two_odd_points}.

\subsubsection{Odd lines}
We now pair the $1\leqslant i\leqslant l$ such that $\sum_{j\in A_i} n_j$ is odd. This is possible since $\sum_i n_i = 2g-2$.
In each member of such a pair, replace one of the $n_j$ by $n_{j}-1$. Now $\sum_{j\in A_i} n_j$ is even and we can use the pattern described in \cref{subsubsec:even_lines}.
However, one zero in each line needs its order to be increased by one.
Place the two lines next to each other, so that these two points are on the same vertical line.
We now slit along a vertical line segment joining them, and another vertical one that is obtained by an integer translation from the first, see \cref{fig:HConnected}.
Finally glue the sides of the vertical slits as indicated in \cref{fig:HConnected}.

\subsubsection{Exceptional lines}
In the special case where all the $j\in A_i$ satisfy $n_j = 1$ and $\mathrm{Card}(A_i)$ is even and at least four, then we do not use the pattern of \cref{subsubsec:even_lines}. Indeed, while it creates the required combinatorics of branch points, the span of the absolute period in this case is stricly included in $\Z + i\Z$.
In order to ensure that the absolute periods span $\Z + i\Z$, we replace two pairs of slits with the pattern of \cref{fig:H1111} to create four branch points of order two each.
\begin{figure}[h]
\centering{
%\resizebox{100mm}{!}
{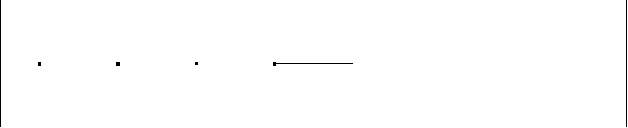}
\caption{Surface in $\mathcal H(1,1,1,1)$ with $d = 8$.}
\label{fig:H1111}
}
\end{figure}

\subsection{The resulting surface}
The abelian differential $(X, \omega)$ we constructed is of volume $d$ and is in $\mathcal H(n_1, \ldots, n_k)$.
Its absolute periods span $\Z + i\Z$ and $\Psi(X, \omega)$ is given by the partition $P$.
We just need to check that the resulting surface is not in the hyperelliptic component when $k = 2$ and $g$ is even.
There are two cases for our construction.
If the relative periods are the same as the absolute ones, then the pattern we used only has one line.
It has a single horizontal cylinder and \cref{prop:single_cylinder_Zorich} shows that it is not in the hyperelliptic component unless we are in the connected stratum $\mathcal H(1, 1)$.
If the set of relative periods differs from the absolute ones, that is when there are two lines, one can deform continuously the pattern created to align all the slits.
Then we are back to the single cylinder case and our surface is not in the hyperelliptic component unless it is in $\mathcal H(1,1)$ by \cref{prop:single_cylinder_Zorich}.
\section{Genus two}\label{sec:genus_two}
Recall that \cref{prop:MCG_generic} does not handle the genus two case.
In this section we thus suppose that $g = 2$.
To complete the proof of \cref{main_theorem}, it remains to show that the $\chi$ in $H^1(S, Z, \C)$ such that $\chi(H_1(S, \Z))$ is not a lattice and $V(\chi) > 0$  can be realised as periods of abelian differentials in each connected components of the strata of $\Omega\mathcal M_2$.
The two strata $\mathcal H(2)$ and $\mathcal H(1,1)$ of $\Omega\mathcal M_2$ are connected, by \cite{KZ03}.

By \cref{lemme:MCGgenus2}, there exists a symplectic basis $(a_1, b_1, a_2, b_2)$ such that $\Re(\chi(a_1))$ and $\Re(\chi(a_2))$ generate a dense group in $\R$ and $\det(\chi(a_i), \chi(b_i)) > 0$ for $i \in \{1,2\}$. We can further assume that $|\Re (\chi(b_2))| < |\Re(\chi(a_1))|$.
\subsection{The stratum $\mathcal H(2)$}
Let us say that a line segment in $\C$ is of holonomy $z\in \C$ if it is of the form $[z_0, z_0 + z]$.
Start with two parallelograms $P_i$ in $\C$ with sides of holonomy $\chi(a_i)$ and $\chi(b_i)$ for $i\in \{1,2\}$.
We glue the opposite sides of $P_1$ together with translations, as well as the opposite sides of holonomy $\chi(b_2)$ in $P_2$.
Let us consider a straight line segment $\delta$ in the torus $T_1$ formed by $P_1$ of holonomy $\chi(a_2)$. 
It is embedded in $T_2$ since $0 < |\Re(\chi(b_2))| < |\Re(\chi(a_1))|$.
We now slit $T_1$ along $\delta$. 
Glue the boundaries created in $T_1$ together with the boundaries of $P_2$ as in \cref{fig:H2}.
\begin{figure}[h]
\centering{
%\resizebox{100mm}{!}
{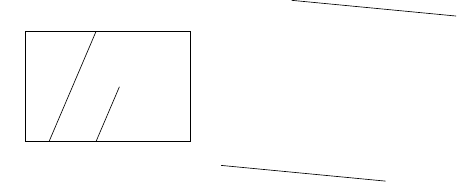}
\caption{Surface in $\mathcal H(2)$.}
\label{fig:H2}
}
\end{figure}
We can identify the resulting surface with $S$ so that its period map is given by $\chi$.
\subsection{The stratum $\mathcal H(1, 1)$}
We now turn to the construction of abelian differentials in $\mathcal H(1,1)$.
Let us construct an abelian differential with period map $\chi$.
Let $\delta$ be a simple path joining the two elements of $Z$.
By \cref{lemma:point_pushing}, we can act by $\Mod(S, Z)$ to ensure that 
\[0 < |\Re(\delta)| < \min \left (|\Re\left (\chi(a_1)\right )|, |\Re(\chi(a_2))|\right ).\]
Form two parallelograms $P_i$ with sides of holonomy $\chi(a_i)$ and $\chi(b_i)$ for $i\in \{1,2\}$.
We glue the opposite sides of each $P_i$ by translations to form two tori. 
Now slit each torus along a line segment of holonomy $\chi(\delta)$.
Our requirement on $\Re(\chi(\delta))$ ensures that these line segments embed in both tori.
It suffices now to glue back the opposite boundaries to form a surface in $\mathcal H(1,1)$ with the desired periods, see \cref{fig:H11}.
\begin{figure}[h!]
\centering{
%\resizebox{100mm}{!}
{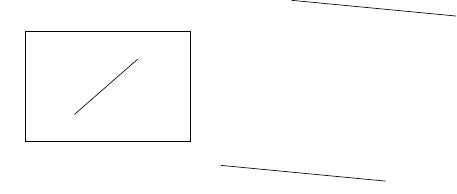}
\caption{Surface in $\mathcal H(1,1)$.}
\label{fig:H11}
}
\end{figure}

\bibliographystyle{alpha}
\bibliography{biblio.bib}

\newcommand{\etalchar}[1]{$^{#1}$}
\begin{thebibliography}{CDD{\etalchar{+}}25}

\bibitem[BE79]{BE79}
Israel Berstein and Allan~L. Edmonds.
\newblock On the construction of branched coverings of low-dimensional manifolds.
\newblock {\em Trans. Amer. Math. Soc.}, 247:87--124, 1979.

\bibitem[BGKZ03]{BSGDKZH03}
Semeon Bogatyi, Daciberg~L. Gon\c{c}alves, Elena Kudryavtseva, and Heiner Zieschang.
\newblock Realization of primitive branched coverings over closed surfaces following the {H}urwitz approach.
\newblock {\em Cent. Eur. J. Math.}, 1(2):184--197, 2003.

\bibitem[BGKZ04]{BGK04}
S.~A. Bogatyi, D.~L. Gon\c{c}alves, E.~A. Kudryavtseva, and H.~Zieschang.
\newblock Realization of primitive branched coverings over closed surfaces.
\newblock In {\em Advances in topological quantum field theory}, volume 179 of {\em NATO Sci. Ser. II Math. Phys. Chem.}, pages 297--316. Kluwer Acad. Publ., Dordrecht, 2004.

\bibitem[BJJP22]{BJJP22}
Matt Bainbridge, Chris Johnson, Chris Judge, and Insung Park.
\newblock Haupt's theorem for strata of abelian differentials.
\newblock {\em Israel J. Math.}, 252(1):429--459, 2022.

\bibitem[CDD{\etalchar{+}}25]{surface_dynamics}
Frédéric Chapoton, Diana Davis, Vincent Delecroix, Oscar Fontaine, Charles Fougeron, Luke Jeffreys, Samuel Lelièvre, Julian Rüth, Ivan Yakovlev, and Christopher Zhang.
\newblock Surface-dynamics, February 2025.

\bibitem[CDF23]{CDF23}
Gabriel Calsamiglia, Bertrand Deroin, and Stefano Francaviglia.
\newblock A transfer principle: from periods to isoperiodic foliations.
\newblock {\em Geom. Funct. Anal.}, 33(1):57--169, 2023.

\bibitem[CF24]{CF24}
Dawei Chen and Gianluca Faraco.
\newblock Period realization of meromorphic differentials with prescribed invariants.
\newblock {\em Forum Math. Sigma}, 12:Paper No. e90, 114, 2024.

\bibitem[CF25]{CF25}
Dawei Chen and Gianluca Faraco.
\newblock Relative period realization of holomorphic differentials with prescribed invariants.
\newblock {\em To appear}, 2025.

\bibitem[CFG22]{CFG22}
Shabarish Chenakkod, Gianluca Faraco, and Subhojoy Gupta.
\newblock Translation surfaces and periods of meromorphic differentials.
\newblock {\em Proc. Lond. Math. Soc. (3)}, 124(4):478--557, 2022.

\bibitem[Fil24]{F24}
Simion Filip.
\newblock Translation surfaces: dynamics and {H}odge theory.
\newblock {\em EMS Surv. Math. Sci.}, 11(1):63--151, 2024.

\bibitem[FM01]{FarbMargalit}
Benson Farb and Dan Margalit.
\newblock {\em A Primer on Mapping Class Groups}.
\newblock Princeton University Press, 2001.

\bibitem[Ham18]{H18}
Ursula Hamenst\"adt.
\newblock Ergodicity of the absolute period foliation.
\newblock {\em Israel J. Math.}, 225(2):661--680, 2018.

\bibitem[Hau20]{H20}
Otto Haupt.
\newblock Ein {S}atz \"uber die {A}belschen {I}ntegrale 1. {G}attung.
\newblock {\em Math. Z.}, 6(3-4):219--237, 1920.

\bibitem[Hur01]{H01}
A.~Hurwitz.
\newblock Ueber die {A}nzahl der {R}iemann'schen {F}l\"achen mit gegebenen {V}erzweigungspunkten.
\newblock {\em Math. Ann.}, 55(1):53--66, 1901.

\bibitem[Jef21]{J21}
Luke Jeffreys.
\newblock Single-cylinder square-tiled surfaces and the ubiquity of ratio-optimising pseudo-{A}nosovs.
\newblock {\em Trans. Amer. Math. Soc.}, 374(8):5739--5781, 2021.

\bibitem[Kap20]{K20}
Michael Kapovich.
\newblock Periods of abelian differentials and dynamics.
\newblock In {\em Dynamics: topology and numbers}, volume 744 of {\em Contemp. Math.}, pages 297--315. Amer. Math. Soc., Providence, RI, 2020.

\bibitem[KZ03]{KZ03}
Maxim Kontsevich and Anton Zorich.
\newblock Connected components of the moduli spaces of {A}belian differentials with prescribed singularities.
\newblock {\em Invent. Math.}, 153(3):631--678, 2003.

\bibitem[LF22]{LF22}
Thomas Le~Fils.
\newblock Periods of abelian differentials with prescribed singularities.
\newblock {\em Int. Math. Res. Not. IMRN}, (8):5601--5616, 2022.

\bibitem[LF23]{LF21}
Thomas Le~Fils.
\newblock Holonomy of complex projective structures on surfaces with prescribed branch data.
\newblock {\em J. Topol.}, 16(1):430--487, 2023.

\bibitem[Mat22]{M22}
Carlos Matheus.
\newblock Three lectures on square-tiled surfaces.
\newblock In {\em Teichm\"{u}ller theory and dynamics}, volume~58 of {\em Panor. Synth\`eses}, pages 77--99. Soc. Math. France, Paris, 2022.

\bibitem[McM14]{McMullenIso}
Curtis~T. McMullen.
\newblock Moduli spaces of isoperiodic forms on {R}iemann surfaces.
\newblock {\em Duke Math. J.}, 163(12):2271--2323, 2014.

\bibitem[MT02]{MT02}
Howard Masur and Serge Tabachnikov.
\newblock Rational billiards and flat structures.
\newblock In {\em Handbook of dynamical systems, {V}ol.\ 1{A}}, pages 1015--1089. North-Holland, Amsterdam, 2002.

\bibitem[Nar92]{Nara92}
Raghavan Narasimhan.
\newblock {\em Compact {R}iemann surfaces}.
\newblock Lectures in Mathematics ETH Z\"urich. Birkh\"auser Verlag, Basel, 1992.

\bibitem[Pet20]{P20}
Carlo Petronio.
\newblock The {H}urwitz existence problem for surface branched covers.
\newblock {\em Winter Braids Lect. Notes}, 7:Exp. No. 2, 43, 2020.

\bibitem[Thu97]{T97}
William~P. Thurston.
\newblock {\em Three-dimensional geometry and topology. {V}ol. 1}, volume~35 of {\em Princeton Mathematical Series}.
\newblock Princeton University Press, Princeton, NJ, 1997.

\bibitem[Win22]{Winsor}
Karl Winsor.
\newblock {\em Dynamics and {T}opology of {A}bsolute {P}eriod {F}oliations of {S}trata of {H}olomorphic 1-{F}orms}.
\newblock ProQuest LLC, Ann Arbor, MI, 2022.
\newblock Thesis (Ph.D.)--Harvard University.

\bibitem[Ygo21]{Ygouf}
Florent Ygouf.
\newblock {Isoperiodic Dynamics in Rank 1 Affine Invariant Orbifolds}.
\newblock {\em Int. Math. Res. Not. IMRN}, 09 2021.
\newblock rnab153.

\bibitem[Yoc10]{Yoccoz10}
Jean-Christophe Yoccoz.
\newblock Interval exchange maps and translation surfaces.
\newblock In {\em Homogeneous flows, moduli spaces and arithmetic}, volume~10 of {\em Clay Math. Proc.}, pages 1--69. Amer. Math. Soc., Providence, RI, 2010.

\bibitem[Zor06]{Z06}
Anton Zorich.
\newblock Flat surfaces.
\newblock In {\em Frontiers in number theory, physics, and geometry. {I}}, pages 437--583. Springer, Berlin, 2006.

\bibitem[Zor08]{Z08}
Anton Zorich.
\newblock Explicit {J}enkins-{S}trebel representatives of all strata of abelian and quadratic differentials.
\newblock {\em J. Mod. Dyn.}, 2(1):139--185, 2008.

\end{thebibliography}
\end{document}